\newcommand{\trid}{\triangleright}
\newcommand{\id}{\mathrm{id}}
\newcommand{\BB}{\mathcal{B}}
\newcommand{\CC}{\mathbb{C}}
\newcommand{\CB}{{\CCC\BB}}
\newcommand{\CCC}{\mathcal{C}}
\newcommand{\NN}{\mathbb{N}}
\newcommand{\Hom}{\mathrm{Hom}}
\newcommand{\GL}{\mathrm{GL}}
\newcommand{\SSS}{\mathcal{S}}
\newcommand{\Vu}{\mathcal{V}}
\newcommand{\Sn}{\mathbb{S}_n}
\newcommand{\sym}{\mathrm{Sym}}
\newcommand{\FP}{\mathcal{FP}}
\newcommand{\PB}{\mathcal{CB}}
\newcommand{\ydG}{\prescript{G}{G}{\mathcal{YD}}}
\newcommand{\ydH}{\prescript{H}{H}{\mathcal{YD}}}
\newcommand{\ydHs}{\prescript{H_{\sigma}}{H_{\sigma}}{\mathcal{YD}}}
\newtheorem{theorem}{Theorem}[section]
\newtheorem{lemma}[theorem]{Lemma}
\newtheorem{corollary}[theorem]{Corollary}
\newtheorem{proposition}[theorem]{Proposition}
\newtheorem{definition}[theorem]{Definition}
\newtheorem{remark}[theorem]{Remark}
\newtheorem{thm}{Theorem}[section]
\newtheorem*{Theorem*}{Theorem}
\newtheorem*{Corollary*}{Corollary}
\theoremstyle{definition}
\newtheorem{defn}[thm]{Definition}
\newtheorem{rmk}[thm]{Remark}
\newtheorem*{rmk*}{Remark}
\newtheorem{example}[thm]{Example}
\author{Giovanna Carnovale, Francesco Esposito, Lleonard Rubio y Degrassi}
\title{Approximation of graded bialgebras}
\begin{document}
\begin{abstract}
Motivated by an equivalence of categories established by Kapranov and Schechtman, we introduce, for each non-negative integer $d$, the category of connected bialgebras modulo $d+1$. We show that these categories fit into an inverse system of categories  whose inverse limit category is equivalent to the category of connected bialgebras. In addition, we extend the notion of approximation  of connected  bialgebras to those that are not necessarily generated in degree 1 and show that, for connected bialgebras in the category of Yetter-Drinfeld modules over a Hopf algebra, approximation is compatible with cocycle twisting. 
\end{abstract}
\maketitle

\section{Introduction}

Connected bialgebras over a base field $\Bbbk$ are $\mathbb N$-graded bialgebras, such that the degree $0$ component is $\Bbbk$. Notable examples are: the tensor algebra, the cotensor (big shuffle) algebra and the symmetric algebra. This notion can be generalized if we replace the category of vector spaces by any additive $\Bbbk$-linear braided monoidal category $\Vu$: in this case key examples include the exterior algebra, cohomology algebras of simply-connected Lie groups, the positive part of quantized enveloping algebras, and, more generally, Nichols (shuffle) algebras.  Since our main example of such a braided category is the category $\Vu= {}_{H}^{H}\mathcal{YD}$ of Yetter-Drinfeld modules over a $\Bbbk$-Hopf algebra $H$, we concentrate on $\Bbbk$-linear categories  $\Vu$ whose objects are  $\Bbbk$-vector spaces with possibly additional structure. In other words, we require that the category $\Vu$ has a well-behaved functor to the category of $\Bbbk$-vector spaces. Most of the results we obtain could be formulated in a more general setting, to the cost of readability. 

 It has been recently shown by Kapranov and Schechtman in \cite{KS3} that the category $\CB(\Vu)$ of connected bialgebras in $\Vu$  has a geometric counterpart: it is equivalent to the category $\FP(\sym(\mathbb{C}); \Vu)$ of factorized perverse sheaves on the space $\sym(\mathbb C)$ of monic polynomials and with values in $\Vu$. One of our motivations is the translation of basic algebraic properties  of connected bialgebras into geometric statements by means of this equivalence. 
 
As one can see from the examples  mentioned above, connected bialgebras may have very different nature: for instance, the symmetric and exterior algebra are quadratic, that is, they have a presentation with generators in degree $1$ and homogeneous relations in degree $2$; the positive part of the quantized enveloping algebra can be generated by degree $1$ elements with relations up to a degree $d>2$ that depends on the Cartan matrix. Even for the well-studied class of Nichols algebras, there is no general method to determine a minimal set of homogeneous relations  or to give a bound to the degree of such relations. It is then natural to investigate whether, for a connected bialgebra $A$, there is a $d$ such that $A$ has a (graded) presentation whose relations are  bounded in degree $d$, and to estimate such a $d$.  If the number of generators is finite, existence of such a bound is equivalent to  $A$ being finitely presented. 

When a graded algebra $A$ is generated in degree one, then one has a classical notion of 
$d$-atic cover, by considering the covering algebra whose relations are generated up to degree $d$. However, connected bialgebras such as the cotensor algebra or the cohomology ring of  $SU(n)$, see Example \ref{ex:cotensor} and Example \ref{Ex:Liegroup}, are not generated in degree one, and a more general approach is required to deal with them. 

The first main goal of this paper is to generalize this construction  to all connected bialgebras. In order to do so, we introduce the category of connected bialgebras modulo $d+1$ in $\Vu$, 
along with the $d$-th truncation and the $d$-th extension functors, and define the $d$-th approximation functor
as a composition of the truncation and the extension functors. Our construction specializes to the classical procedure  for bialgebras generated in degree one,
(Theorem \ref{Thm1}). Along the way, we  prove that the $d$-th extension functor is left adjoint to the $d$-th truncation functor,  see Proposition \ref{prop:adjunction}. 

In addition, we show in Proposition \ref{prop:limit-PB} that when $d$ runs in $\NN$, the family of  connected bialgebras modulo $d+1$ in $\Vu$
fits into a system of categories whose inverse limit category, in the spirit of \cite[\S 5.1]{Schn}, is  equivalent to $\CB(\Vu)$. 

The second main goal of this work is to show that the $d$-th approximation functors are well-behaved with respect to equivalence of categories, (Theorem \ref{thm:equivaprox}). In particular, if $\Vu= {}_{H}^{H}\mathcal{YD}$ for a Hopf algebra $H$ and $\sigma$ is a left cocycle on $H$, then we exploit results in \cite{MO,AFGV} to show that the $d$-th approximation functors are well behaved with the equivalence induced by twisting $H$ by $\sigma$, see Theorem \ref{thm:twistapprox}. 
As a consequence, if two connected  bialgebras  $A$, $A'$ are twist equivalent, then the same holds for their respective $d$-th approximations, and  $A$ coincides with its $d$-th approximation if and only if $A'$ coincides with its $d$-th approximation.

This in particular holds for Nichols algebras, Corollary \ref{cor:d-twist}. In the Appendix \ref{Appendix}, we provide an alternative proof of Corollary \ref{cor:d-twist} that does not require categorical machinery.

 In   the spirit of \cite{KS3}, in the forthcoming papers \cite{CERyD, CERyD2}   we will develop the geometric counterpart of the truncation and extension functors  and establish an equivalence between the category of connected bialgebras modulo $d+1$ in $\Vu$ and the category of factorized perverse sheaves on suitable open subspaces of $\sym(\CC)$. This allows the translation of algebraic statements about finite presentation and approximations of bialgebras into statements about perverse sheaves on $\sym(\CC)$ that is our main motivation behind the introduction of the approximation functors.

\section{Background}

\subsection{Bialgebras in braided categories}
\label{S:BMC}
 
With the purpose of fixing notation, we recall the definitions of monoidal category and braiding. The reader is referred to \cite{EGNO} for a complete treatment.

A monoidal category structure on a category $\mathcal{C}$ is the datum of a bifunctor 
    $\otimes\colon \mathcal{C}\times\mathcal{C}
    \rightarrow \mathcal{C}$, called tensor product, a natural isomorphism $a\colon(-\otimes -)\otimes\to -\otimes(-\otimes -)$, called the associativity constraint, an object $\mathbf{1}_{\mathcal{C}}$ of $\mathcal{C}$ called the unit object, and an isomorphism $\upsilon\colon \mathbf{1}_{\mathcal{C}}\otimes \mathbf{1}_{\mathcal{C}}\to \mathbf{1}_{\mathcal{C}}$, called unit constraint,  subject to the pentagon and unit compatibility axioms \cite[(2.2), (2.3)]{EGNO}.  Notable examples are the category of $\Bbbk$-vector spaces, the category of representations of a Hopf algebra, the category of representations of a Lie algebra.  A monoidal category is said to be strict if for all objects $A,B,C$ in ${\mathcal C}$ there are equalities $(A\otimes B)\otimes C=A\otimes(B\otimes C)$ and $A\otimes \mathbf{1}_{\mathcal{C}}=A=\mathbf{1}_{\mathcal{C}}\otimes A$ and the associativity and unit constraints are the identity maps.
    
    Let $\mathcal{C}, \mathcal{C}'$ be 
    two monoidal categories with tensor products $\otimes$ and $\tilde{\otimes}$, respectively. A 
    monoidal functor from $\mathcal{C}$ to $\mathcal{C}'$ is a pair $(\mathcal{F}, \omega)$, where $\mathcal{F}\colon \mathcal{C}\to 
    \mathcal{C}'$ is a functor, and  for all objects $A,B\in \mathcal{C}$ we have that $\omega_{A,B}\colon \mathcal{F}(A)\tilde{\otimes} \mathcal{F}(B) \to 
  \mathcal{F}(A\otimes B)$ is a natural isomorphism such that $\mathcal{F}
  (\mathbf{1}_{\mathcal{C}})$ is isomorphic to 
  $\mathbf{1}_{\mathcal{C}'}$ and such that $(\mathcal{F}, \omega)$ satisfies the monoidal structure axiom \cite[Equation 2.23]{EGNO}.  A monoidal functor $(\mathcal{F},\omega)$ is {strict} if $\omega_{A,B}$ and the isomorphism from $(\mathbf{1}_{\mathcal{C}})$  to 
  $\mathbf{1}_{\mathcal{C}'}$ are identity maps for all objects $A,B\in \mathcal{C}$. A monoidal functor $(\mathcal{F},\omega)$ is a {monoidal equivalence} if $\mathcal{F}$  is an equivalence of the underlying categories. 

Given a monoidal category structure  $(\otimes,\mathbf{1}_{\mathcal{C}},a,\upsilon)$ on ${\mathcal C}$, the {opposite}  monoidal category structure 
on $\mathcal{C}$ is given by the bifunctor $\otimes^{op}$ such that $A \otimes^{op}B:=B\otimes A$ for any pair of objects $A,B$ in ${\mathcal C}$,  the associator $a^{op}_{A,B,C}=a^{-1}_{C,B,A}$, for any triple of objects $A,B,C$ in ${\mathcal C}$ together with $\mathbf{1}_{\mathcal{C}}$, and $\upsilon$. 
By Mac Lane's theorem, every monoidal category is monoidally equivalent to a strict one \cite[Theorem 2.8.5]{EGNO}, hence we will treat all monoidal categories as strict ones.

A {braiding} on a (strict) monoidal structure $(\mathcal{C},\otimes,\mathbf{1}_{\mathcal{C}})$ is an isomorphism of functors $c\colon\otimes \rightarrow \otimes^{op}$ satisfying the following equations on triples of objects $A,B$ and $C$: 
\begin{align}\label{eq:braiding}
    c_{A,B\otimes C}&=(\id_B \otimes c_{A,C})\circ(c_{A,B}\otimes \id_C),\\
   c_{A\otimes B,C}&=(c_{A,C}\otimes \id_B)\circ(\id_A\otimes c_{B,C})  
\end{align}

Equation \eqref{eq:braiding} imply the {Yang-Baxter equation} on ${\mathcal C}\times {\mathcal C}\times {\mathcal C}$:
\begin{equation}\label{eq:braid1}    
(\id_C\otimes c_{A,B}) \circ (c_{A,C}\otimes \id_B) \circ (\id_A \otimes c_{B,C}) =
    (c_{B,C}\otimes \id_A) \circ (\id_B \otimes c_{A,C}) \circ (c_{A,B}\otimes \id_C). \end{equation}

A monoidal category with a braiding is a {braided category}.   Let $\mathcal{C}, \mathcal{C}'$ be two braided categories having tensor products $\otimes, \tilde{\otimes}$ and braidings  $c, c'$, respectively. A monoidal functor $(\mathcal{F},\omega)$ from $\mathcal{C}$ to $\mathcal{C}'$ is {braided} if $ \mathcal{F}(c_{A,B}) \circ \omega_{A,B} = \omega_{B,A}\circ c'_{\mathcal{F}(A), \mathcal{F}(B)}$ for all objects $A, B$ in $\mathcal{C}.$

A {braided monoidal equivalence} of braided monoidal categories is a braided monoidal functor $(\mathcal{F},\omega)$ such that $\mathcal F$ is an equivalence of the underlying categories.  
\medskip

The example of  braided  monoidal category we are mainly interested in is the category $\ydH$ of Yetter-Drinfeld modules over a Hopf algebra $H$.  In order to give a concrete example we recall that, in the particular case in which $H=\Bbbk G$ is the group algebra of a finite group, an object in $\ydG$ 
is a vector space $V$ together with an action of $G$ and a $G$-grading $V=\bigoplus_{g\in G}V_g$ satisfying $g\cdot V_h=V_{ghg^{-1}}$ for $g,\,h\in G$. The tensor product of objects in $\ydG$ is naturally equipped with a Yetter-Drinfeld module structure and the braiding on $V\otimes W$ is given by $c_{V,W}(v\otimes w)=h\cdot w\otimes v$ for every $v\in V_h,\, h\in G$ and  $w\in W$.  
We refer to \cite[\S 3.4]{HS} for the  definition of Yetter-Drinfeld modules and braiding in $\ydH$ for general $H$.

\medskip

With this main example in mind, from now on  $\Vu$ will always denote an abelian, $\Bbbk$-linear braided category with a strict monoidal functor to the category of $\Bbbk$-vector spaces. Thus, we will regard its objects as vector spaces without further notice. 

\medskip

As in the category of vector spaces, one defines the notions of an algebra and a coalgebra  in $\Vu$. An {algebra} in $\Vu$ is an object $A$ in $\Vu$ together  with an associative morphism $m_A\colon A\otimes A\to A$ and a morphism $\eta_A\colon \mathbf{1}_\Vu\to A$ satisfying the relation of a unit morphism on $A$, \cite[(3.1.3), (3.1.4)]{HS}. Dually, a {coalgebra} in $\Vu$ is an object $C$ in $\Vu$, together with a coassociative morphism $\Delta_C\colon C\to C\otimes C$ and a morphism $\epsilon_C\colon C \to \mathbf{1}_\Vu $ satisfying a counit diagram, \cite[\S 3.1]{A2}. Algebra, respectively, coalgebra morphisms are morphisms in $\Vu$ that are compatible with the operation and the unit, respectively the counit.

\medskip

Given two algebras (respectively coalgebras) $A,B$ in ${\Vu}$, then $A\otimes B$ is an algebra (respectively a coalgebra) in ${\Vu}$ where the multiplication map (resp. comultiplication map) is defined as: $m_{A\otimes B}= (m_A\otimes m_B)\circ (\id\otimes c_{B,A}\otimes \id)$, $\Delta_{A\otimes B}= (\id\otimes  c_{A,B}\otimes \id)\circ  (\Delta_A\otimes \Delta_B)$.  This allows  to introduce the notion of a bialgebra in $\Vu$: assume that $(A, m_A, \eta_A)$ is an algebra and $(A, \Delta_A, \epsilon_A)$ a coalgebra in $\Vu$. Then $(A, m_A, \eta_A, \Delta_A, \epsilon_A)$ is a {bialgebra} in $\Vu$ if $\Delta_A$ and $\epsilon_A$ are algebra morphisms in $\Vu$.

We denote by $m_{A}^{(n)}\colon A^{\otimes (n+1)}\to A$ the $n$-fold iteration of the multiplication, that is,  $m_{A}^{(1)}:=m_{A}$ and for $n>1$ we define $m_{A}^{(n)}:= m_{A}^{(n-1)}\circ(m_A\otimes \id^{(n-1)})$.
If $f\colon A\to B$ is an algebra morphism, then $m_B^{(n)}\circ f^{\otimes (n+1)}=f\circ m_A^{(n)}$ for every $n\geq1$.

\medskip

Let ${\mathbb N}$ be the set of non-negative integers. Let ${\mathcal V}_{gr}$ be the category of ${\mathbb N}$-graded objects in $\Vu$, that is, objects are of the form $A=\bigoplus_{j\geq0} A_j$ where each $A_j$ is an object in $\Vu$ and morphisms are homogeneous.  
It is not hard to verify that ${\mathcal V}_{gr}$ is again braided monoidal, \cite[Definition 2.1]{Sch}. In more detail, for two objects $A=\bigoplus_{j\geq 0}A_j$ and $B=\bigoplus_{j\geq0}B_j$ in 
${\mathcal V}_{gr}$, we define {$A\otimes B$ where}  
$(A\otimes B)_n=\bigoplus_{i=0}^n A_i\otimes B_{n-
i}$;   the unit in ${\mathcal V}_{gr}$ 
is given by the unit $\mathbf 1_\Vu$ in 
${\Vu}$ considered as an element concentrated in degree $0$. The  braiding in ${\mathcal V}_{gr}$  is given by the sequence 
$\varphi_{A,B}=(\bigoplus_{i+j=k}c_{A_i,B_j})_{k\geq 0}$, where $c_{A_i,B_j}$ is the braiding in $\Vu$.

\medskip

We denote by $\CB(\Vu)$ the category   of  {connected bialgebras}, called  {primitive bialgebras} in \cite{KS3}. An object $A=\bigoplus_{n\geq0}A_n$ in $\CB(\Vu)$  is a bialgebra in $\Vu_{gr}$, that is, it is  ${\mathbb N}$-graded  as an algebra   and a  coalgebra, satisfying the additional connectedness condition  $A_0=\mathbf{1}_\Vu$.

\begin{example}\label{ex:tensor-cotensor-Nichols}
 {\rm    Let $V$ be an object in $\Vu$. Setting $T^m(V):=V^{\otimes m}$ for each $m\geq 0$, the object $T(V)=\bigoplus_{m\geq0}T^m(V)$ of $\Vu_{gr}$
can be equipped with two natural connected bialgebra structures: the tensor algebra $T_{!}(V)$, with the usual tensor multiplication and comultiplication determined requiring that all elements in $V$ are primitive; and the cotensor algebra $T_{\ast}(V)$, with the multiplication given by the braided shuffle product \cite[Section 6.5]{HS} and comultiplication given by concatenation. When $\Vu$ is the category of vector spaces with usual flip, the shuffle algebra structure was introduced in \cite{Ree}.  
One defines  the Nichols algebra $\mathcal{B}(V)$ as the image of the unique graded
algebra morphism $\Omega\colon  T_{!}(V)\mapsto 
T_{\ast}(V)$ which extends the identity on $V$. This morphism is also a coalgebra morphism hence  $\mathcal{B}
(V)$ is an object in $\mathcal{CB}(\Vu)$. } 
\end{example}

\begin{rmk}\label{rmk:bmbmf}
    Let $\Vu, \Vu'$ be two  braided monoidal categories and let $(\mathcal{T},\omega)$ be a braided monoidal functor from $\Vu$ to $\Vu'$. Since $
\mathcal{T}$ preserves bialgebras \cite[Remark 3.2.13]{HS},  then $(\mathcal{T},\omega)$  induces a functor from the category of bialgebras in $\Vu$ to the category of bialgebras in $\Vu'$. In addition, $(\mathcal{T},\omega)$ induces a monoidal functor, $(\overline{\mathcal{T}}, \overline{\omega})$ from $\Vu_{gr}$ to 
$\Vu'_{gr}$. More precisely, let   
$A=\bigoplus_{i\geq 0}A_i$ and $ B=\bigoplus_{j\geq 0}B_j$  be objects in $\Vu_{gr}$. We define 
$\overline{\mathcal{T}}(A):=\bigoplus_{i\geq 0}\mathcal{T}
(A_i)$ and for every morphism $f=(f_i)_{i\geq 
0}\colon A\to B$, we define $\overline{\mathcal{T}}(f):=
(\mathcal{T}(f_i))_{i\geq 0}$. Similarly, 
$\overline{\omega}_{A,B}\colon \overline{\mathcal{T}}(A)\tilde{\otimes} \overline{\mathcal{T}}(B)\to \overline{\mathcal{T}}(A\otimes B)
$ is a natural  isomorphism defined by $\overline{\omega}_{A,B}:=\bigoplus_{i,j\geq 0}\omega_{A_i,B_j}$. Moreover, $(\overline{\mathcal{T}}, \overline{\omega})$ is a braided  functor since $(\mathcal{T},\omega)$ is a braided functor. Therefore  $(\overline{\mathcal{T}},  \overline{\omega})$ induces a functor,  which by abuse of notation is again denoted by $\mathcal{T}$, from $\CB(\Vu)$ to $\CB(\Vu')$. For $A\in \CB(\Vu)$ with multiplication $m_A$ and comultiplication $\Delta_A$, the multiplication and comultiplication on $\mathcal T(A)$ are defined, respectively, by $\mathcal T(m_A)\circ \overline{\omega}_{A,A}$ and 
$\overline{\omega}_{A,A}^{-1}\circ \mathcal T(\Delta_A)$.
\end{rmk}

\begin{rmk}\label{notbialg}
 If $\Vu$ is not symmetric, that is, if $c\circ c\not=\id_{\otimes}$, then the tensor product of  two bialgebras in ${\Vu}$ is not necessarily a bialgebra.  For example, let $G$ be a group and let $V_1,V_2$ be objects in the braided category ${}_{\Bbbk G}^{\Bbbk G}\mathcal{YD}$ of Yetter-Drinfeld modules over $\Bbbk G$.  Then $\mathcal{B}(V_1)\otimes \mathcal{B}(V_2)$ is not always a bialgebra in ${}_{\Bbbk G}^{\Bbbk G}\mathcal{YD}$, see \cite[Proposition 1.10.12, Corollary 1.6.10]{HS}. Hence, $\CB(\Vu)$ is not equipped with a monoidal structure if we consider the tensor product of algebras and coalgebras. It is also not $\Bbbk$-linear, nor additive. 
\end{rmk}

Let $\PB^1(\Vu)$ be the full subcategory of $\PB(\Vu)$ whose objects are generated as algebras by their degree $1$ component. An object $B=\bigoplus_{j\geq0}B_j$ in $\PB^1(\Vu)$ is then the quotient of $T_!(B_1)$ by some graded ideal and coideal $J$ contained in $\left(\bigoplus_{m\geq 2}T_!^2(V)\right)$. 
For any $d\in \NN$, the subspace $\bigoplus_{m=2}^d(J\cap T_!^m(B_1))$ is a (possibly trivial) graded coideal of $T_!(V)$. Hence, the generated ideal $\left(\bigoplus_{m=2}^d(J\cap T_!^m(B_1))\right)$ is a graded ideal and coideal.  The bialgebra quotient \begin{equation}
\label{eq:approximation}
    \widehat{B}_d:=T_!(B_1)/\left(\bigoplus_{m=2}^d(J\cap T_!^m(B_1))\right)
\end{equation} is again an object in $\CB^1(\Vu)$. A morphism $f\colon T_!(B_1)/J\to T_!(B'_1)/J'$ in $\PB^1(\Vu)$ is completely determined by its degree $1$ component $f_1\colon B_1\to B_1'$, which is necessarily a coalgebra morphism. Composing $f_1$ with the natural inclusion $B_1'\subset T_!(B_1')$ followed by the projection to $\widehat{B'}_d$ gives a coalgebra morphism in $\Vu_{gr}$. The latter can be extended to a unique bialgebra morphism $T_!(B_1)\to \widehat{B'}_d$ which factors through $\widehat{B}_d$. We denote the induced morphism by $\widehat{f}_d$. In this way we have defined an endofunctor $\PB^1(\Vu)\to \PB^1(\Vu)$ called the $d$-atic covering functor. 
For $d=2$, the algebra $\widehat{B}_d$ is the usual quadratic cover $T_!(B_1)/(J\cap T^2_!(B_1))$.
\medskip 

However, not all connected bialgebras are generated in degree one.
\begin{example}\label{ex:cotensor}
{\rm   Let $V$ be a vector space.  Then, the big shuffle algebra $T_*(V)$ as in \cite{Ree} is not generated in degree $1$ as an algebra: indeed, for linearly independent $v,w\in V$, the element $v\otimes w$ is not symmetric, hence it is not generated by elements in $T^1(V)=V$.} 
\end{example}

If $A$ is the cohomology of a compact connected Lie group $G$, then $A$ is a Hopf algebra in the category of $\mathbb{N}$-graded vector spaces with braiding given by $c(v\otimes w)=(-1)^{pq} w\otimes v$, where $v$ and $w$ have degree $p, q$, respectively, see \cite{MM} for the statement in terms of homology. In addition,  $A$  is isomorphic (as an algebra) to an exterior algebra on odd-dimensional generators \cite{Hopf}. These  bialgebras give further examples of connected bialgebras that are not generated in degree one. 

\begin{example}\cite[Example 5.F ]{Mc}\cite[\S 6 Corollary 6.5]{MimTod}
\label{Ex:Liegroup}
{\rm Assume that the characteristic of $\Bbbk$ is $0$. Then we have the following algebra isomorphisms:
 \begin{itemize}
     \item $\mathrm{H}^*(SU(n);\Bbbk)\simeq \Lambda[x_3,x_5,\dots, x_{2n-1}]$;  
     \item $\mathrm{H}^*(SO(2m+1;\Bbbk))\simeq \Lambda [x_3, x_7, \dots, x_{4m-1}]$,
 \end{itemize}
 where the subscripts on the generators refer to their grading in the cohomology ring, that is, $x_i \in H^i(G;\Bbbk)$.}
 \end{example}

\subsection{Duality}\label{subsec:dual} Let $V$ be an object of a braided monoidal category $\mathcal{V}$. A {left dual} of $V$ is a triple $(V^*, \mathrm{ev}_V, \mathrm{coev}_V)$, where $V^*$ is an 
object of $\mathcal{C}$,  $\mathrm{ev}_V\colon V^*\otimes V \to \mathbf{1}_{\mathcal{C}}$ and $\mathrm{coev}_V\colon  \mathbf{1}_{\mathcal{C}}\to V\otimes V^*$ are morphisms such that $(\mathrm{ev}_V\otimes \id)\circ  (\id\otimes \mathrm{coev}_V)=\id_{V^*}$, $ (\id\otimes \mathrm{ev}_V) \circ (\mathrm{coev}_V 
\otimes\id)=\id_{V}$. 

A braided  category $\Vu$ is called {rigid} if each object has a left dual.
The prototypical examples of rigid braided monoidal categories are the category of finite-dimensional vector spaces and the category of  finite-dimensional Yetter-Drinfeld modules over a Hopf algebra \cite[Remark 4.2.4]{HS}. If $\Vu$ is rigid, then for any morphism $f\colon V \to W$ one defines the left dual $f^*\colon  W^*\to V^*$ as $f^*=(\mathrm{ev}_W\otimes\id_{V^*})(\id_{W^*}\otimes f\otimes\id_{V^*})(\id_{W^*}\otimes\mathrm{coev}_V)$, see \cite[Remark 3.5.2]{HS}.  The contravariant functor $(-)^*\colon \Vu \to\Vu$, which sends an object $V$ to $V^*$ and maps a morphism $f$ to $f^*$ is called the {left duality functor}. 
 
There is a natural isomorphism from the identity functor to $(-)^{**}$, see \cite[Theorem 3.5.8]{HS}. In addition, $\Vu_{gr}$ is also rigid, by taking the graded dual, i.e.,  for an object  $A=\bigoplus_{j\geq0} A_j$ in ${\mathcal V}_{gr}$, the dual $A^*$ is $\bigoplus_{j\geq0} A^*_j$ and for a morphism $f\colon  A\to B$, the morphism $f^*\colon B^*\to A^*$ is defined as $(f^*_j)_{j\geq 0}$.

\medskip
 
More generally, a {duality} in a category $\mathcal{A}$ is 
a contravariant functor $\mathbb {D}\colon  \mathcal{A} \to \mathcal{A}$ with a quasi-inverse. For example, the left duality functor in $\Vu_{gr}$ as above induces a duality on $\PB(\Vu)$, as the dual of a bialgebra in a braided monoidal category is again a bialgebra, see \cite[Remark 3.5.9]{HS}.

Let ${\mathcal A}$ and  ${\mathcal B}$ be categories with  dualities  $\mathbb {D}, \mathbb {D}'$, respectively and let $G\colon {\mathcal A}\to {\mathcal B}$ be a  functor. Then the {dual functor} $\overline{{\mathbb D}}$
of $G$ is a functor  from ${\mathcal A}$ to $ {\mathcal B}$ defined on objects as $\overline{{\mathbb D}}G(A):={\mathbb D'}(G({\mathbb D}A))$ and on morphisms 
$f\colon A\to B$ 
as $\overline{{\mathbb D}}G(f):={\mathbb D'}(G({\mathbb D}f))$.

\section{Approximation functor}

The goal of this section is to define the $d$-th approximation functor $F_d$ on the category of connected bialgebras, as a composition of two adjoint functors: the truncation functor $\Theta_d$ and the extension functor $\Psi_{d!}$. We first need to introduce a truncated version of  $\CB(\Vu)$.

\subsection{Connected bialgebras modulo d+1.}
\label{cgbia}

Let $d\in\mathbb N$. For an object $V=\bigoplus_{j\geq0}V_j$ in $\Vu_{gr}$ its $d$-th truncation is the object $V_{\leq d}:=\bigoplus_{j\leq d}V_d$ in $\Vu_{gr}$. Similarly, for a morphism $f=(f_j)_{j\in\NN}\colon A\to A'$ in $\Vu_{gr}$, its truncation $f_{\leq d}$ is the morphism 
$(f_j)_{0\leq j\leq d}\colon A_{\leq d}\to A'_{\leq d}$.

\medskip

\begin{definition}A connected bialgebra mod $d+1$ is an algebra and coalgebra $A=\bigoplus_{l=0}^d A_l$ in $\Vu_{gr}$, such that $A_0=\mathbf{1}_\Vu$, and such that the multiplication $m_A$ and the comultiplication $\Delta_A$ satisfy the following compatibility relation:
 
\begin{equation}\label{eq:truncated}
\begin{tikzcd}
 (A\otimes A)_{\leq d}  \arrow[rr,"(m_A)_{\leq d}"]  \arrow[d,"(\Delta_A\otimes \Delta_A)_{\leq d}"']
&& A_{\leq d} \arrow[d,"(\Delta_A)_{\leq d}"] \\
(A\otimes A\otimes A\otimes A)_{\leq d} \arrow[dr,"(\id\otimes c_{A,A}\otimes\id)_{\leq d}"'] && (A\otimes A)_{\leq d} \\
& (A\otimes A\otimes A\otimes A)_{\leq d} \arrow[ur,"(m_A\otimes m_A)_{\leq d}"']
\end{tikzcd}
\end{equation}

The category  $\PB^{\leq d}(\Vu)$ of connected  bialgebras $\mod d+1$ 
is the category whose objects are connected bialgebras mod $d+1$ and whose morphisms are graded coalgebra maps preserving the multiplication.
\end{definition}

In other words, objects in $\PB^{\leq d}(\Vu)$ satisfy the bialgebra compatibility condition up to degree $d$.

\begin{example}{\rm 
Let $A=\bigoplus_{i=0}^dA_i\in\CB(\Vu)$, with comultiplication $\Delta_A$ and multiplication $m_A$.
\begin{enumerate}
\item If $d=0$, then $\CB^{\leq 0}(\Vu)$ is the category consisting of only the unit object ${\mathbf 1}_\Vu$ and the identity morphism.
    \item If $d\geq 1$, then for any $a\in A_1$, $b\in A_{d}$ we have $\Delta(a)=a\otimes 1+1\otimes  a$ and $m_A(a\otimes b)=0$.
    \item If $d=1$, then $m_A$ is trivial on $A_1\otimes A_1$, and on $A_0\otimes A_1$ and $A_1\otimes A_0$ is scalar multiplication. Hence the assignment $A\mapsto A_1$ determines an equivalence of categories $\CB^{\leq 1}(\Vu)\simeq \Vu$. 
    \item If $d=2$, then $\CB^{\leq 2}(\Vu)$ is equivalent to the category whose objects are quadruples $(V_1,V_2, \mu,\delta)$, where $V_1,V_2$ are objects in $\Vu$, and $\mu\colon V_1\otimes V_1\to V_2$ and $\delta\colon V_2\to V_1\otimes V_1$ are morphisms satisfying $\delta\mu(a\otimes b)=a\otimes b+c_{V_1,V_1}(b\otimes a)$ 
    for all $a,\,b\in A_1$. A morphism from $(V_1,V_2, \mu,\delta)$ to $(W_1,W_2, \mu',\delta')$ is  given by a pair $(f_1, f_2)$ of maps $f_1\colon V_1\to W_1, f_2\colon V_2\to W_2$ such that $f_2(\mu(a\otimes b))=\mu'(f_1(a)\otimes f_1(b))$ and such that $\delta'(f_2(c))=(f_1\otimes f_1)(\delta(c))$ for any  $a,b \in V_1$ and $c\in V_2$.   
\end{enumerate}}
    \end{example}
\begin{rmk}\label{rmk:bmbmf2}
   Let $\Vu, \Vu'$ be two  braided monoidal categories and let $(\mathcal{T},\omega)$ be a braided monoidal functor from $\Vu$ to $\Vu'$. Let $(\overline{\mathcal{T}}, \overline{\omega})$ be the induced braided monoidal functor  from $\Vu_{gr}$ to $\Vu'_{gr}$. As for connected bialgebras, $(\overline{\mathcal{T}}, \overline{\omega})$  induces a functor, denoted  by $\mathcal{T}^d$, from $\CB^{\leq d}(\Vu)$ to $\CB^{\leq d}(\Vu')$.  For an algebra $A$ in $\CB^{\leq d}(\Vu)$ with multiplication $m_A$ and comultiplication $\Delta_A$, the multiplication and comultiplication on $\mathcal T^d(A)$ are defined, respectively, by
   $\mathcal T^d(m_A)\circ \overline{\omega}_{A,A}$ and $\overline{\omega}^{-1}_{A,A}\circ\mathcal T^d(\Delta_A)$. 
   In addition, $\overline{\mathcal{T}}(A)$ satisfies the bialgebra compatibility condition up to degree $d$ because $A$ does and $\overline{\mathcal{T}}$ is a graded braided monoidal functor.
\end{rmk}
  
\begin{rmk}\label{rmk:duality}
{\rm As for ordinary bialgebras, the notion of bialgebra modulo $d+1$ is self-dual. Indeed, if $\Vu$ is rigid and $A=\bigoplus_{l=0}^d A_l$ is an object in $\PB^{\leq d}(\Vu)$, then $A^*=\bigoplus_{l=0}^dA_l^*$ is a graded algebra and a graded coalgebra satisfying $A^*_0=\mathbf{1}_\Vu$. In addition, \cite[Theorem 3.5.8]{HS} ensures the existence of  natural isomorphisms 
$$\varphi_{V,W}\colon  V^*\otimes W^*\to (V\otimes W)^*,$$ for any $V$ and $W$ in $\Vu$,  so for each $l\geq0$ there is an isomorphism 
\[
\varphi_l=\bigoplus_{i+j=l}\varphi_{i,j}\colon \bigoplus_{i+j=l}A^*_i\otimes A^*_j\to \left (\bigoplus_{i+j=l}A_i\otimes A_j\right )^*. 
\]
Combining  $\varphi_l\colon (A^*\otimes A^*)_l\to (A\otimes A)^*_l$
 for $0\leq l\leq d$ gives an isomorphism 
\[
\varphi_{\leq d}=\bigoplus_{0\leq l\leq d}\varphi_l\colon  (A^*\otimes A^*)_{\leq d}\to (A\otimes A)^*_{\leq d}.
\]}
Making use of $\varphi_{\leq d}$ the diagram \eqref{eq:truncated} dualizes as:
\[
\begin{tikzcd}
 (A^*\otimes A^*)_{\leq d}    
&& (A^*)_{\leq d} \arrow[ll, swap,"(\Delta_{A^*})_{\leq d}"] \\
(A^*\otimes A^*\otimes A^*\otimes A^*)_{\leq d} \arrow[u,"(m_{A^*}\otimes m_{A^*})_{\leq d}"'] && (A^*\otimes A^*)_{\leq d}\arrow[u,"(m_{A^*})_{\leq d}"]\arrow[dl,"(\Delta_{A^*}\otimes \Delta_{A^*})_{\leq d}"'] \\
& (A^*\otimes A^*\otimes A^*\otimes A^*)_{\leq d}\arrow[ul,"(\id\otimes c_{A^*,A^*}\otimes\id)_{\leq d}"'] 
\end{tikzcd}
\]
because $c_{W,V}\circ \varphi_{V,W}=\varphi_{W,V}\circ c_{V^*,W^*}$ for all pairs of objects $V,W$ in $\Vu$, \cite[Theorem 3.5.8]{HS}. Hence,  $A^*$ is an object in $\PB^{\leq d}(\Vu)$ and in virtue of loc. cit., the left duality functor in $\Vu_{gr}$ induces a duality on $\PB^{\leq d}(\Vu)$ for any $d\geq 0$ that we denote by $\mathbb D_d$. 
\end{rmk}

\subsection{The truncation functor}Let $d\in \NN$, let  $A=\bigoplus_{j\geq0}A_j$ be an object in $\PB(\Vu)$ with coproduct $\Delta_A$, counit $\epsilon_A$ and multiplication $m_A$, and let $A_{\leq d}\in \Vu_{gr}$. The restriction of $\epsilon_A$ and $\Delta_A$ to $A_{\leq d}$ equips it with a graded coalgebra structure. Since $A_{\leq d}$ identifies with $A/\bigoplus_{j>d}A_j$ as objects in $\Vu_{gr}$ and $A_{>d}:=\bigoplus_{j>d}A_j$ is a graded ideal, the restriction of $m_A$ to $A_{\leq d}\otimes A_{\leq d}$ equips $A_{\leq d}$ with a graded algebra structure. We denote by 
$\Theta_d(A)$ the algebra and coalgebra obtained this way, and by $m_{\Theta_d(A)}$, $\Delta_{\Theta_d(A)}$ and $\epsilon_{\Theta_d(A)}$ its multiplication, comultiplication and counit. Let $i,j\in\NN$ such that $i+j\leq d$. With the appropriate identifications, we have equality for the restrictions: $m_A|_{A_i\otimes A_j}=m_{\Theta_d(A)}|_{A_i\otimes A_j}$, $\epsilon_A|_{A_i}=\epsilon_{\Theta_d(A)}|_{A_i}$, $\Delta_A|_{A_i}=\Delta_{\Theta_d(A)}|_{A_i}$, and $c_{A,A}|_{A_i\otimes A_j}=c_{\Theta_d(A),\Theta_d(A)}|_{A_i\otimes A_j}$. Therefore, \eqref{eq:truncated} is verified for $\Theta_d(A)$. In addition, the truncation $(\ )_{\leq d}$ of any morphism $f\colon A\to A'$ in $\PB(\Vu)$ gives a morphism $\Theta_d(f)$ in  $\PB(\Vu)$, so $\Theta_d$ is a covariant functor $\PB(\Vu)\to \PB^{\leq d}(\Vu)$ which we call the $d$-th truncation functor. 

\bigskip

\begin{remark}\label{rem:dual-truncation}{\rm    If $\Vu$ is rigid, then the truncation functor 
    $\Theta_d$ and the dual functors $\mathbb{D}\colon  \PB(\Vu)\to \PB(\Vu)$, $\mathbb{D}_d\colon  \PB^{\leq d}(\Vu)\to \PB^{\leq d}(\Vu)$ are compatible, i.e., $\Theta_d\circ\mathbb{D}=\mathbb{D}_d\circ\Theta_d$, that is, $\Theta_d$ is self-dual. Indeed, for $A=\bigoplus_{i\geq 0} A_i$ in $\PB(\Vu)$, then  $\Theta_d\circ\mathbb{D}(A)=\Theta_d(\bigoplus_{i\geq 0} A^*_i)=\bigoplus_{i=0}^d A^*_i=\mathbb{D}_d
    (\bigoplus_{i= 0}^d A_i)=\mathbb{D}_d\circ\Theta_d(A)$ at the level of objects in $\Vu_{gr}$, and similarly for morphisms, since evaluation and coevaluation maps are obtained graded component by graded component. Moreover, 
    $\Delta_{\mathbb D_d\Theta_d(A)}=m_{\Theta_d(A)}^*=m^*_{\Theta_d({\mathbb D}A)}=\Delta_{\Theta_d(\mathbb D(A))}$, and similarly for $m_{\mathbb D_d\Theta_d(A)}$, giving the equality at the level of objects in $\PB(\Vu)$. A similar argument applies to morphisms. }
    \end{remark}

\subsection{The extension functor}

In order to define the extension functors, we need to generalize Radford's construction of the free bialgebra over a coalgebra from \cite[Section 5.3]{R} to the braided setting. Possibly passing to a completion, we assume that $\Vu$ is closed under countable direct sums. 

\medskip

Let $(C, \Delta_C, \epsilon_C)$ be a coalgebra in $\Vu$ and let $T(C)=\bigoplus_{j\geq0}C^{\otimes j}$ be equipped with the tensor algebra structure, so  $T(C)\otimes T(C)$ is an algebra in $\Vu$ as in Section \ref{S:BMC}. Let $\iota\colon C\to T(C)$  be the canonical inclusion.  By the universal property of $T(C)$, there are unique algebra morphisms $\epsilon_{T(C)}\colon T(C)\to \Bbbk$ and $\Delta_{T(C)}\colon T(C)\to T(C){\otimes}T(C)$ in $\Vu$ extending $\epsilon_C\circ\iota$ and $(\iota\otimes\iota)\circ \Delta_C$. This way $T(C)$ becomes a bialgebra in $\Vu$ enjoying the following universal property.
\begin{proposition}\label{prop:radford}
Let $C$ be a coalgebra in $\Vu$. For any bialgebra $A$ in $\Vu$ and any coalgebra morphism $f\colon C\to A$ there is a unique bialgebra morphism $\widetilde{f}\colon T(C)\to A$ satisfying $\widetilde{f}\circ\iota=f$. 
\end{proposition}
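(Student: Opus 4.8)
The plan is to reduce everything to the universal property of $T(C)$ as the \emph{free algebra} on the object $C$ in $\Vu$, which is exactly the property already used above to construct $\Delta_{T(C)}$ and $\epsilon_{T(C)}$. First I would observe that a coalgebra morphism $f\colon C\to A$ is in particular a morphism in $\Vu$; since $A$ carries an algebra structure, the free-algebra property yields a unique \emph{algebra} morphism $\widetilde{f}\colon T(C)\to A$ with $\widetilde{f}\circ\iota=f$. This settles existence and uniqueness of $\widetilde{f}$ as an algebra map in one stroke, so the entire remaining content of the statement is that this algebra map is automatically compatible with the counits and the comultiplications, and that uniqueness as a bialgebra map follows a fortiori.

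For the counit, I would note that $\epsilon_A\circ\widetilde{f}$ and $\epsilon_{T(C)}$ are both algebra morphisms $T(C)\to\mathbf{1}_\Vu$, and that each restricts to $\epsilon_C$ along $\iota$: indeed $\epsilon_A\circ\widetilde{f}\circ\iota=\epsilon_A\circ f=\epsilon_C$ because $f$ is a coalgebra morphism, while $\epsilon_{T(C)}\circ\iota=\epsilon_C$ by construction. The uniqueness clause of the free-algebra property then forces $\epsilon_A\circ\widetilde{f}=\epsilon_{T(C)}$.

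The comultiplication is the crux. Here I would compare the two morphisms $\Delta_A\circ\widetilde{f}$ and $(\widetilde{f}\otimes\widetilde{f})\circ\Delta_{T(C)}$ from $T(C)$ to $A\otimes A$, and again invoke uniqueness. Both are algebra morphisms: the first because $\widetilde{f}$ is an algebra map and $\Delta_A$ is an algebra map by the bialgebra axiom on $A$; the second because $\Delta_{T(C)}$ was constructed as an algebra map and $\widetilde{f}\otimes\widetilde{f}$ is an algebra map for the braided tensor-product structures with $m_{A\otimes A}=(m_A\otimes m_A)\circ(\id\otimes c_{A,A}\otimes\id)$ and likewise for $m_{T(C)\otimes T(C)}$. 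Precomposing with $\iota$ and using that $f$ is a coalgebra morphism, both sides equal $(f\otimes f)\circ\Delta_C$ on $C$, so by uniqueness they agree on all of $T(C)$; thus $\widetilde{f}$ respects comultiplication, and $\widetilde{f}$ is a bialgebra morphism. Uniqueness of $\widetilde{f}$ as a bialgebra morphism is then immediate, since any such morphism is in particular an algebra morphism extending $f$ along $\iota$, and is therefore already unique at the algebra level.

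I expect the one genuinely braided point, and hence the main obstacle, to be the claim that $\widetilde{f}\otimes\widetilde{f}\colon T(C)\otimes T(C)\to A\otimes A$ is an algebra morphism for the tensor-product algebra structures. This is not merely formal: it requires commuting $\widetilde{f}$ past the braiding appearing in the multiplications $m_{A\otimes A}$ and $m_{T(C)\otimes T(C)}$, that is, it rests on the naturality identity $c_{A,A}\circ(\widetilde{f}\otimes\widetilde{f})=(\widetilde{f}\otimes\widetilde{f})\circ c_{T(C),T(C)}$. Once naturality of $c$ is invoked the verification is routine, so the braiding enters the proof only at this single, standard point.
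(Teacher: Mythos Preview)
Your argument is correct and is precisely the standard Radford-style proof that the paper defers to by citing \cite[Theorem 5.3.1 (b)]{R}; you have moreover correctly isolated the one point where the braided setting intervenes, namely the naturality of $c$ needed to make $\widetilde{f}\otimes\widetilde{f}$ an algebra morphism.
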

\begin{proof}This is proved exactly as \cite[Theorem 5.3.1 (b)]{R}.
\end{proof}
By construction $\iota$ is a one-to-one coalgebra morphism, so from now on we will identify $C$ and its image in $T(C)$ neglecting $\iota$. With this identification  $C\otimes C$ becomes a subcoalgebra of $T(C)\otimes T(C)$ because the restriction of the braiding $c_{T(C),T(C)}$ to $C\otimes C$ coincides with $c_{C,C}$. We observe that if $C=\bigoplus_{j\geq0}C_j$ is a coalgebra in $\Vu_{gr}$, then $T(C)$ is a bialgebra in $\Vu_{gr}$ where $T(C)_l=\bigoplus_{j\geq 0}(C^{\otimes j})_l$. 

\medskip

Let  now $d\in \mathbb N$ and let $A=\bigoplus_{l=0}^dA_l$ be an object in $\PB^{\leq d}(\Vu)$. Note that the tensor algebra $T(A)$ over $A$ regarded as a coalgebra is not connected, as in degree $0$ we have $\mathbf{1}_{\Vu}\oplus(\bigoplus_{j\geq1}A_0^{\otimes j})$. However, the elements $1_{\Bbbk}\in \mathbf{1}_{\Vu}=A^{\otimes 0}$ and $1_A^{\otimes j}\in A^{\otimes j}_0$ for $j\geq 1$ are all grouplike. 

\medskip

Denoting the multiplication in $A$ and $T(A)$, by $m_A$ and $m_{T(A)}$, respectively, we consider the algebra ideal $J_A=((m_{T(A)}-m_A)(A\otimes A)_{\leq d},\Bbbk(1_{\Bbbk}-1_A))$ in $T(A)$. 
\medskip
\begin{proposition}\label{prop:bialg}
With notation as above, $J_A$ is a coideal of $T(A)$.
\end{proposition}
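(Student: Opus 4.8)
The plan is to use the standard principle that a two-sided ideal of a bialgebra is automatically a coideal once its generators satisfy the two defining coideal conditions. Since $T(A)$ is a bialgebra by Proposition \ref{prop:radford}, both $\Delta_{T(A)}$ and $\epsilon_{T(A)}$ are algebra morphisms. Writing $\overline{J_A}:=J_A\otimes T(A)+T(A)\otimes J_A$, I would first check that $\overline{J_A}$ is a two-sided ideal of the braided tensor product algebra $T(A)\otimes T(A)$. The only point requiring the braided structure is that $m_{T(A)\otimes T(A)}=(m_{T(A)}\otimes m_{T(A)})\circ(\id\otimes c_{T(A),T(A)}\otimes\id)$ involves the braiding; by naturality of $c$ with respect to the inclusion $J_A\hookrightarrow T(A)$ one gets $c_{T(A),T(A)}(T(A)\otimes J_A)\subseteq J_A\otimes T(A)$ and $c_{T(A),T(A)}(J_A\otimes T(A))\subseteq T(A)\otimes J_A$, and combined with $J_A$ being a two-sided ideal this shows $\overline{J_A}$ is stable under left and right multiplication.

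Granting this, the reduction is as follows. Every element of $J_A$ is a finite sum $\sum u_k g_k v_k$ with $u_k,v_k\in T(A)$ and $g_k$ among the generators $(m_{T(A)}-m_A)(A\otimes A)_{\leq d}$ and $\Bbbk(1_{\Bbbk}-1_A)$. Applying the algebra morphism $\Delta_{T(A)}$ produces $\sum\Delta_{T(A)}(u_k)\,\Delta_{T(A)}(g_k)\,\Delta_{T(A)}(v_k)$, so once I know $\Delta_{T(A)}(g_k)\in\overline{J_A}$ for each generator, the ideal property of $\overline{J_A}$ forces $\Delta_{T(A)}(J_A)\subseteq\overline{J_A}$. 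The counit condition $\epsilon_{T(A)}(J_A)=0$ follows in the same spirit from $\epsilon_{T(A)}$ being multiplicative together with $\epsilon_{T(A)}(g_k)=0$: indeed $\epsilon_{T(A)}((m_{T(A)}-m_A)(a\otimes b))=\epsilon_A(a)\epsilon_A(b)-\epsilon_A(m_A(a\otimes b))=0$ because $\epsilon_A$ is multiplicative, and $\epsilon_{T(A)}(1_{\Bbbk}-1_A)=1-\epsilon_A(1_A)=0$.

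So the heart of the proof is to verify $\Delta_{T(A)}(g)\in\overline{J_A}$ on generators. For the grouplike generator this is immediate: $\Delta_{T(A)}(1_{\Bbbk}-1_A)=1_{\Bbbk}\otimes 1_{\Bbbk}-1_A\otimes 1_A=(1_{\Bbbk}-1_A)\otimes 1_{\Bbbk}+1_A\otimes(1_{\Bbbk}-1_A)\in\overline{J_A}$. For a generator $(m_{T(A)}-m_A)(a\otimes b)$ with $a\in A_i$, $b\in A_j$ and $i+j\leq d$, I would compute $\Delta_{T(A)}(m_{T(A)}(a\otimes b))=\Delta_{T(A)}(a)\,\Delta_{T(A)}(b)=\Delta_A(a)\,\Delta_A(b)$, multiplying in $T(A)\otimes T(A)$; since $c_{T(A),T(A)}$ restricts to $c_{A,A}$ on $A\otimes A$, writing $\Delta_A(a)=\sum a_{(1)}\otimes a_{(2)}$, $\Delta_A(b)=\sum b_{(1)}\otimes b_{(2)}$ and $c_{A,A}(a_{(2)}\otimes b_{(1)})=\sum \tilde b_{(1)}\otimes\tilde a_{(2)}$, this equals $\sum m_{T(A)}(a_{(1)}\otimes\tilde b_{(1)})\otimes m_{T(A)}(\tilde a_{(2)}\otimes b_{(2)})$. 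On the other side, since $m_A(a\otimes b)$ lies in degree $i+j\leq d$, the compatibility relation \eqref{eq:truncated} applies and gives $\Delta_{T(A)}(m_A(a\otimes b))=\Delta_A(m_A(a\otimes b))=\sum m_A(a_{(1)}\otimes\tilde b_{(1)})\otimes m_A(\tilde a_{(2)}\otimes b_{(2)})$ with exactly the same Sweedler and braiding data. Subtracting and telescoping summandwise via $XY'-xy'=(X-x)Y'+x(Y'-y')$, with $X=m_{T(A)}(a_{(1)}\otimes\tilde b_{(1)})$, $x=m_A(a_{(1)}\otimes\tilde b_{(1)})$ and likewise for $Y',y'$, expresses the difference as a sum of terms in $J_A\otimes T(A)$ and $T(A)\otimes J_A$.

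The hard part will be the degree bookkeeping that legitimizes this last step. I must check that each factor $a_{(1)}\otimes\tilde b_{(1)}$ and $\tilde a_{(2)}\otimes b_{(2)}$ lies in $(A\otimes A)_{\leq d}$, so that $(m_{T(A)}-m_A)$ applied to it is a genuine generator of $J_A$ and so that \eqref{eq:truncated} is applicable in the first place. This holds because $\Delta_A$ preserves the total grading and the braiding $c_{A,A}$ permutes homogeneous components without changing their degrees, whence $\deg a_{(1)}+\deg\tilde b_{(1)}=\deg a_{(1)}+\deg b_{(1)}\leq i+j\leq d$ and similarly for $\tilde a_{(2)}\otimes b_{(2)}$; in particular no information is lost to truncation when invoking \eqref{eq:truncated}, since every intermediate term stays in total degree $i+j\leq d$. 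Once this is in place, $\Delta_{T(A)}((m_{T(A)}-m_A)(a\otimes b))\in\overline{J_A}$, which completes the verification on generators and hence shows that $J_A$ is a coideal.
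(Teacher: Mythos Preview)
Your proof is correct and, at the structural level, follows the same outline as the paper's: show the generating set of $J_A$ already satisfies the coideal conditions, then use that in a (braided) bialgebra the two-sided ideal generated by a coideal is again a coideal. The difference is purely in execution. Where you carry out an explicit Sweedler-notation computation and use the telescoping identity $XY'-xy'=(X-x)Y'+x(Y'-y')$ to place $\Delta_{T(A)}\bigl((m_{T(A)}-m_A)(a\otimes b)\bigr)$ in $\overline{J_A}$, the paper instead observes that $(A\otimes A)_{\leq d}$ is a subcoalgebra of $T(A)\otimes T(A)$, that both $m_{T(A)}|_{(A\otimes A)_{\leq d}}$ and $(m_A)_{\leq d}$ are coalgebra morphisms into $T(A)$ (the latter by \eqref{eq:truncated}), and then invokes the general fact \cite[Exercise 2.1.29]{R} that the image of the difference of two coalgebra morphisms is a coideal. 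Your telescoping computation is precisely the content of that exercise, so the two arguments are the same proof at different levels of abstraction: the paper's is shorter by outsourcing the computation, while yours makes the degree bookkeeping (why every intermediate tensor stays in $(A\otimes A)_{\leq d}$, so that \eqref{eq:truncated} applies and the differences are genuine generators of $J_A$) fully explicit.
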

\begin{proof}
First of all, $\Delta_{T(A)}(1_{\Bbbk}-1_A)\in T(A)_0\otimes J_A+J_A\otimes T(A)_0$ and $\epsilon_{T(A)}(1_{\Bbbk}-1_A)=0$ because the difference of two grouplikes $g$ and $h$ in a coalgebra is always $(g,h)$-primitive. Hence, $\Bbbk(1_{\Bbbk}-1_A)$ is a coideal. Observe  now that $(A\otimes A)_{\leq d}$ is a subcoalgebra of $A\otimes A$, whence of $T(A)\otimes T(A)$. By \eqref{eq:truncated} the morphism $(m_A)_{\leq d}$ 
gives a coalgebra morphism $(A\otimes A)_{\leq d}\to A_{\leq d}\subset A\subset T(A)$.  The restriction of $m_{T(A)}$ to $(A\otimes A)_{\leq d}$ is also a coalgebra morphism because $T(A)$ is a bialgebra. Since by \cite[Exercise 2.1.29]{R} the image of the difference of two coalgebra morphisms is a coideal, $(m_A-m_{T(A)})(A\otimes A)_{\leq d}$ is a coideal in $T(A)$. Hence,  $J_A=((m_A-m_{T(A)})(A\otimes A)_{\leq d},\Bbbk(1_{\Bbbk}-1_A))$ is a coideal in $T(A)$.
\end{proof}

\medskip

It follows from Proposition \ref{prop:bialg} that $\Psi_{d!}(A):=T(A)/J_A$ is a graded bialgebra, with grading inherited from $A$, and that $(T(A)/J_A)_0\simeq \Bbbk$. Hence $\Psi_{d!}(A)$ is an object in $\PB(\Vu)$. 

\medskip

Composing a morphism $f\colon A\to A'$ in $\PB^{\leq d}(\Vu)$ with the natural inclusion $A'\subset T(A')$ gives coalgebra morphism in $\Vu_{gr}$, which extends to a bialgebra morphism $T(f)$  by Proposition \ref{prop:radford} applied to $A$ and $T(A')$. By construction $T(f)$ is graded.

\begin{lemma}
    Let $A, A'$ be in $\PB^{\leq d}(\Vu)$ and let  $f\colon A\to A'$ be a morphism in $\PB^{\leq d}(\Vu)$. The bialgebra morphism $T(f)\colon T(A)\to T(A')$ maps the ideal $J_A$ into $J_{A'}$.
\end{lemma}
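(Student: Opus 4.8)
The plan is to reduce the statement to a check on the two families of ideal generators of $J_A$ and then to exploit that $T(f)$ is an algebra morphism. Recall that $J_A$ is the two-sided ideal of $T(A)$ generated by the elements $(m_{T(A)}-m_A)(A\otimes A)_{\leq d}$ together with $\Bbbk(1_{\Bbbk}-1_A)$, and that $J_{A'}$ is likewise a two-sided ideal. A general element of $J_A$ is a finite sum of terms $a\,g\,b$ with $a,b\in T(A)$ and $g$ a generator; since $T(f)(a\,g\,b)=T(f)(a)\,T(f)(g)\,T(f)(b)$ and $J_{A'}$ is a two-sided ideal, it suffices to prove $T(f)(g)\in J_{A'}$ for every generator $g$.

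First I would dispose of the grouplike generator. Since $T(f)$ is unital we have $T(f)(1_{\Bbbk})=1_{\Bbbk}$, and since $f$ is a graded coalgebra morphism it sends the grouplike $1_A\in A_0=\mathbf 1_\Vu$ to the grouplike $1_{A'}\in A'_0=\mathbf 1_\Vu$. Recalling that $T(f)$ restricts to $f$ on $A$ by construction (Proposition \ref{prop:radford}), this gives $T(f)(1_{\Bbbk}-1_A)=1_{\Bbbk}-1_{A'}\in J_{A'}$.

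The main step concerns the generators $(m_{T(A)}-m_A)(x)$ for $x\in(A\otimes A)_{\leq d}$. Here I would use three facts: that $T(f)$ is a bialgebra morphism restricting to $f$ on $A$, so $(T(f)\otimes T(f))|_{A\otimes A}=f\otimes f$; that $f$ preserves the multiplication, being a morphism in $\PB^{\leq d}(\Vu)$; and that $f$ is graded. Multiplicativity of $T(f)$ yields $T(f)(m_{T(A)}(x))=m_{T(A')}((f\otimes f)(x))$, while $T(f)|_A=f$ and compatibility of $f$ with the products give $T(f)(m_A(x))=f(m_A(x))=m_{A'}((f\otimes f)(x))$. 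Subtracting, $T(f)((m_{T(A)}-m_A)(x))=(m_{T(A')}-m_{A'})((f\otimes f)(x))$. Because $f$ is graded, $f\otimes f$ carries $(A\otimes A)_{\leq d}$ into $(A'\otimes A')_{\leq d}$, so $(f\otimes f)(x)$ is again an admissible argument and the right-hand side lies in $(m_{T(A')}-m_{A'})(A'\otimes A')_{\leq d}\subseteq J_{A'}$, as required.

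The only genuine care needed — and the closest thing to an obstacle — is the bookkeeping of the identifications: one must track the embedding of $A$ (resp.\ $A'$) into $T(A)$ (resp.\ $T(A')$) via $\iota$, so that $T(f)$ really restricts to $f$ and so that $T(f)\otimes T(f)$ restricts to $f\otimes f$ on $A\otimes A$; and one must verify that $f\otimes f$ respects the truncation $(-)_{\leq d}$, which is precisely where the gradedness of $f$ is used. Once these identifications are pinned down, the verification is a direct application of the multiplicativity of $T(f)$ and of $f$, with no further subtlety.
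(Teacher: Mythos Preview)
Your proof is correct and follows essentially the same approach as the paper: both reduce to checking the two families of generators of $J_A$, handle $1_{\Bbbk}-1_A$ via unitality and $f(1_A)=1_{A'}$, and then compute $T(f)(m_{T(A)}-m_A)(x)=(m_{T(A')}-m_{A'})(f\otimes f)(x)$ using that $T(f)$ restricts to $f$ on $A$, that both $T(f)$ and $f$ are multiplicative, and that $f\otimes f$ preserves the truncation $(-)_{\leq d}$ by gradedness. Your explicit reduction to generators and your care about the identifications via $\iota$ are spelled out more fully than in the paper, but the argument is the same.
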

\begin{proof}
By construction, $T(f)(1_{\Bbbk}-1_A)=1_{\Bbbk}-f(1_A)=1_{\Bbbk}-1_{A'}\in J_{A'}$. In addition,
\begin{align*}
T(f)(m_A-m_{T(A)})(A\otimes A)_{\leq d}&=(f\circ m_A-T(f)\circ m_{T(A)}) (A\otimes A)_{\leq d}\\
&=(m_{A'}(f\otimes f)-m_{T(A')}(T(f)\otimes T(f))(A\otimes A)_{\leq d}\\
&=(m_{A'}-m_{T(A')})(f\otimes f)(A\otimes A)_{\leq d}\\
&\subset (m_{A'}-m_{T(A')})(A'\otimes A')_{\leq d}\subset J_{A'},
\end{align*}
where we have used that $f$ coincides with $T(f)$ on $m_A(A\otimes A)_{\leq d}$, that $f$ and $T(f)$ are algebra morphisms, that $(A\otimes A)_{\leq d}\subseteq A_{\leq d}\otimes A_{\leq d}$, and that $f\otimes f$ is graded.
\end{proof}

\medskip

For any morphism $f\colon A\to A'$ in $\PB^{\leq d}(\Vu)$ we then set  $\Psi_{d!}(f)\colon \Psi_{d!}(A)\to \Psi_{d!}(A')$ to be the  bialgebra morphism induced by $T(f)$. It is not hard to verify that $$\Psi_{d!}\colon  \PB^{\leq d}(\Vu)\to \PB(\Vu)$$ is a covariant functor, we call it the $d$-th extension functor. 

\medskip

\subsection{An adjunction}
 Let $d\in \NN$, let $A\in \PB^{\leq d}(\Vu)$ and let $\eta_A\colon A\to \Psi_{d!}(A)$ be the composition of the inclusion $A\subset T(A)$ with the canonical projection 
 $T(A)\to \Psi_{d!}(A)$. 

 \medskip 
 
 For $f \in\mathrm{Hom}_{\PB(\Vu)}(\Psi_{d!}(A),B)$, the composition $f\circ \eta_A\colon A\to B$ is a graded coalgebra morphism with image in $B_{\leq d}$. Now, 
 \begin{align*}
 f\circ \eta_A\circ m_A|_{(A\otimes A)_l}&=0=m_{\Theta_d(B)}((f\circ \eta_A)\otimes (f\circ \eta_A))|_{(A\otimes A)_l} &\mbox{ if $l>d$, and}\\
 f\circ \eta_A\circ m_A|_{(A\otimes A)_{\leq d}}&=f\circ m_{\Psi_{d!}(A)}\circ(\eta_A\otimes\eta_A)|_{(A\otimes A)_{\leq d}}\\
 &=m_B\circ 
 ((f\circ \eta_A)\otimes (f\circ \eta_A))|_{(A\otimes A)_{\leq d}}\\
 &=m_{\Theta_d(B)}\circ 
 ((f\circ \eta_A)\otimes (f\circ \eta_A))|_{(A\otimes A)_{\leq d}}.
 \end{align*}
 Hence, $(f\circ \eta_A)_{\leq d}$ lies in $\mathrm{Hom}_{\PB^{\leq d}}(A,\Theta_d(B))$.  We set 
  \begin{align}\label{eq:Xi}\Xi_{AB}\colon \mathrm{Hom}_{\PB(\Vu)}(\Psi_{d!} (A), B) &\longrightarrow\mathrm{Hom}_{\PB^{\leq d}(\Vu)}(A, \Theta_d (B))\\
 \nonumber f&\mapsto (f\circ \eta_A)_{\leq d}.\end{align}

 Let now $g\in\mathrm{Hom}_{\PB^{\leq d}(\Vu)}(A,\Theta_d (B))$. The composition of $g$ with the coalgebra inclusion $\tau_B\colon\Theta_d(B)\subset B$ extends to a bialgebra morphism $\widetilde{\tau_B\circ g}\colon T(A)\to B$ by Proposition \ref{prop:radford}. 

\begin{lemma}\label{lem:factor}With the above notation, the bialgebra morphism $\widetilde{\tau_B\circ g}\colon T(A)\to B$ factors through $\Psi_{d!}(A)$. 
\end{lemma}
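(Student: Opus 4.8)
The plan is to show that the algebra morphism $\widetilde{\tau_B\circ g}$ annihilates the ideal $J_A$, so that it descends to the quotient $\Psi_{d!}(A)=T(A)/J_A$. Since $\widetilde{\tau_B\circ g}$ is a (unital) algebra morphism and $J_A$ is generated as an ideal by $\Bbbk(1_{\Bbbk}-1_A)$ together with $(m_{T(A)}-m_A)(A\otimes A)_{\leq d}$, it suffices to verify that $\widetilde{\tau_B\circ g}$ vanishes on these two sets of generators; vanishing on all of $J_A$ then follows from multiplicativity and $\Bbbk$-linearity. Write $\phi:=\widetilde{\tau_B\circ g}$ for brevity, and recall that by Proposition \ref{prop:radford} the restriction $\phi|_A$ equals $\tau_B\circ g$.

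For the generator $1_{\Bbbk}-1_A$: as a unital algebra morphism, $\phi(1_{\Bbbk})=1_B$. On the other hand $\phi|_A=\tau_B\circ g$ is a coalgebra morphism, hence it sends the grouplike $1_A\in A_0=\mathbf{1}_\Vu$ to a grouplike of $B$ in degree $0$, which is $1_B$. Thus $\phi(1_{\Bbbk}-1_A)=1_B-1_B=0$.

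For the generators in $(m_{T(A)}-m_A)(A\otimes A)_{\leq d}$, I would compute the two restrictions $\phi\circ m_{T(A)}$ and $\phi\circ m_A$ separately on $(A\otimes A)_{\leq d}$ and check they coincide. Since $\phi$ is an algebra morphism and $\phi|_A=\tau_B\circ g$, on $(A\otimes A)_{\leq d}$ we have $\phi\circ m_{T(A)}=m_B\circ\big((\tau_B\circ g)\otimes(\tau_B\circ g)\big)$. For the other term, $\phi\circ m_A=\tau_B\circ(g\circ m_A)=\tau_B\circ m_{\Theta_d(B)}\circ(g\otimes g)$ on $(A\otimes A)_{\leq d}$, using that $g$ preserves the multiplication. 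Because $g\otimes g$ is graded it carries $(A\otimes A)_{\leq d}$ into $(\Theta_d(B)\otimes\Theta_d(B))_{\leq d}$, and there the truncated product satisfies $\tau_B\circ m_{\Theta_d(B)}=m_B\circ(\tau_B\otimes\tau_B)$, since the identity $m_B|_{B_i\otimes B_j}=m_{\Theta_d(B)}|_{B_i\otimes B_j}$ holds for $i+j\leq d$ (the projection $B\to B_{\leq d}$ being the identity in degrees $\leq d$). Combining, $\phi\circ m_A=m_B\circ\big((\tau_B\circ g)\otimes(\tau_B\circ g)\big)$ on $(A\otimes A)_{\leq d}$ as well, so $\phi\circ(m_{T(A)}-m_A)$ vanishes on $(A\otimes A)_{\leq d}$, as required.

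The point requiring the most care is the bookkeeping around the truncated multiplication $m_{\Theta_d(B)}$: the argument works precisely because the defining relation for morphisms in $\PB^{\leq d}(\Vu)$ (preservation of multiplication) combined with the gradedness of $g$ confines every term to degrees $\leq d$, exactly the range in which $m_{\Theta_d(B)}$ and $m_B$ are indistinguishable. Outside this range the two products genuinely differ by the truncation, so tracking degrees throughout is the crux of the verification.
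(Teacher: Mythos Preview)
Your proof is correct and follows essentially the same line as the paper's: show that $\widetilde{\tau_B\circ g}$ kills both generating sets of $J_A$, using that it restricts to $\tau_B\circ g$ on $A$ and is an algebra morphism. The paper's argument is terser; it writes directly $\tau_B\circ g\circ m_A=m_B\circ((\tau_B\circ g)\otimes(\tau_B\circ g))$ on $(A\otimes A)_{\leq d}$ and then observes that $(\tau_B\circ g)\otimes(\tau_B\circ g)$ and $\widetilde{\tau_B\circ g}\otimes\widetilde{\tau_B\circ g}$ agree on $A\otimes A$, whereas you spell out the intermediate step $\tau_B\circ m_{\Theta_d(B)}=m_B\circ(\tau_B\otimes\tau_B)$ on degrees $\leq d$ that makes this equality hold.
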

\begin{proof}First of all $\tau_Bg(1_A)=1_{B}$ because $g$ is a morphism in $\PB^{\leq d}(\Vu)$, and $\widetilde{\tau_B\circ g}(1_{\Bbbk})=1_B$ because $\widetilde{\tau_B\circ g}$ is a bialgebra morphism, hence $\widetilde{\tau_B\circ g}(1_A-1_{\Bbbk})=0$. Moreover,
\begin{align*}
 \widetilde{\tau_B\circ g}&(m_A-m_{T(A)})(A\otimes A)_{\leq d}=(\tau_B\circ g\circ m_A - \widetilde{\tau_B\circ g}\circ m_{T(A)}) (A\otimes A)_{\leq d}\\
 &=m_B\circ \left(\left((\tau_B\circ g)\otimes (\tau_B\circ g)\right)-(\widetilde{\tau_B\circ g}\otimes \widetilde{\tau_B\circ g})\right)(A\otimes A)_{\leq d}=0
\end{align*}
where we have used that $\widetilde{\tau_B\circ g}$ is an algebra morphism extending $\tau_B\circ g$ on $A$.
\end{proof}
\medskip
Let $\overline{g}\colon \Psi_{d!}(A)\to B$ be the morphism induced by $\widetilde{\tau_B\circ g}\colon T(A)\to B$. We set
\begin{align}\label{eq:Gamma}\Gamma_{AB}\colon \mathrm{Hom}_{\PB^{\leq d}(\Vu)}(A, \Theta_d (B))&\longrightarrow \mathrm{Hom}_{\PB(\Vu)}(\Psi_{d!} (A), B)\\
 \nonumber g&\mapsto\overline{g}.\end{align}

\begin{proposition}\label{prop:adjunction}
 Let $d\geq0$. 
 The maps $\Xi_{AB}$ for $A\in\PB^{\leq d}(\Vu)$ and $B\in\PB(\Vu)$ combine to give a natural isomorphism
 \begin{equation}\label{eq:adjunction}
 \Xi\colon \mathrm{Hom}_{\PB(\Vu)}(\Psi_{d!}(\ ), \ ) \longrightarrow\mathrm{Hom}_{\PB^{\leq d}(\Vu)}(\ , \Theta_d (\ ))
\end{equation} whose inverse is $\Gamma=(\Gamma_{AB})$. Hence, $\Psi_{d!}$ is left adjoint to  $\Theta_d$.
\end{proposition}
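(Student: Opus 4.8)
The plan is to verify directly that, for each fixed pair $A\in\PB^{\leq d}(\Vu)$ and $B\in\PB(\Vu)$, the maps $\Xi_{AB}$ and $\Gamma_{AB}$ of \eqref{eq:Xi} and \eqref{eq:Gamma} are mutually inverse bijections, and then to check that $\Xi=(\Xi_{AB})$ is natural in both variables; naturality of $\Gamma$ then follows formally as the inverse of a natural isomorphism. The computation of both composites rests on a single identity, namely $\overline{g}\circ\eta_A=\tau_B\circ g$ for every $g\in\Hom_{\PB^{\leq d}(\Vu)}(A,\Theta_d(B))$. This holds because $\overline{g}$ is by definition the morphism induced on $\Psi_{d!}(A)=T(A)/J_A$ by $\widetilde{\tau_B\circ g}$, so, writing $\pi\colon T(A)\to\Psi_{d!}(A)$ for the projection, $\iota\colon A\to T(A)$ for the inclusion, and using $\eta_A=\pi\circ\iota$, we get $\overline{g}\circ\eta_A=\overline{g}\circ\pi\circ\iota=\widetilde{\tau_B\circ g}\circ\iota=\tau_B\circ g$, the last equality being that $\widetilde{\tau_B\circ g}$ extends $\tau_B\circ g$ (Proposition \ref{prop:radford}).

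For $\Xi_{AB}\circ\Gamma_{AB}=\id$ I would start from $g$, form $\overline{g}=\Gamma_{AB}(g)$, and compute $\Xi_{AB}(\overline{g})=(\overline{g}\circ\eta_A)_{\leq d}=(\tau_B\circ g)_{\leq d}$. Since $g$ already takes values in $\Theta_d(B)=B_{\leq d}$ and $\tau_B$ is the degreewise inclusion, truncation recovers $g$, so $\Xi_{AB}(\overline{g})=g$. For $\Gamma_{AB}\circ\Xi_{AB}=\id$ I would start from $f\in\Hom_{\PB(\Vu)}(\Psi_{d!}(A),B)$, set $g=(f\circ\eta_A)_{\leq d}$, and prove $\overline{g}=f$. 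Because $\eta_A$ is graded with image in degrees $\leq d$ and $f$ is graded, $f\circ\eta_A$ already lands in $B_{\leq d}$, hence $\tau_B\circ g=f\circ\eta_A$; combined with the identity $\overline{g}\circ\eta_A=\tau_B\circ g$ this gives $\overline{g}\circ\eta_A=f\circ\eta_A$. This is where the universal property is essential: both $\overline{g}\circ\pi$ and $f\circ\pi$ are bialgebra morphisms $T(A)\to B$ whose restrictions to $A$ agree (each equals $\overline{g}\circ\eta_A=f\circ\eta_A$), so by the uniqueness clause of Proposition \ref{prop:radford} they coincide, and since $\pi$ is surjective we conclude $\overline{g}=f$.

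Finally I would establish naturality of $\Xi$. The key auxiliary fact is that $\eta$ is natural, that is $\Psi_{d!}(\alpha)\circ\eta_{A'}=\eta_A\circ\alpha$ for every morphism $\alpha\colon A'\to A$ in $\PB^{\leq d}(\Vu)$, which follows from $T(\alpha)\circ\iota_{A'}=\iota_A\circ\alpha$ and $\pi_A\circ T(\alpha)=\Psi_{d!}(\alpha)\circ\pi_{A'}$. Granting this, naturality in $B$ reduces to functoriality of truncation, $(\beta)_{\leq d}\circ(h)_{\leq d}=(\beta\circ h)_{\leq d}$, while naturality in $A$ uses in addition that $\alpha_{\leq d}=\alpha$ since $A'$ is concentrated in degrees $\leq d$. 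I expect the only genuinely delicate point to be the direction $\Gamma_{AB}\circ\Xi_{AB}=\id$: the remaining steps amount to bookkeeping with gradings and the inclusion $\tau_B$, whereas this one crucially invokes that a bialgebra morphism out of $T(A)$, equivalently out of $\Psi_{d!}(A)$ which is generated as an algebra by the image of $A$, is determined by its restriction to $A$.
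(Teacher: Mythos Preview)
Your proof is correct and follows essentially the same route as the paper's own proof. The paper packages the two composites $\Gamma_{AB}\Xi_{AB}=\id$ and $\Xi_{AB}\Gamma_{AB}=\id$ into a pair of commutative diagrams rather than equational computations, but the content is identical: the key identity $\overline{g}\circ\eta_A=\tau_B\circ g$ you isolate is exactly what the second diagram encodes, and the uniqueness clause of Proposition~\ref{prop:radford} you invoke for $\Gamma_{AB}\Xi_{AB}=\id$ is what makes the first diagram commute; likewise, naturality is argued in the paper via the same naturality of $\eta$ and functoriality of truncation that you spell out.
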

\begin{proof}
Naturality in $A$ and $B$ follows from commutativity of the diagrams
\begin{equation*}\begin{tikzcd}
A' \arrow{r}{\eta_{A'}} \arrow[swap]{d}{\varphi} & T(A')\arrow{r} \arrow{d}{T(\varphi)} &\Psi_{d!}(A')\arrow{d}{\Psi_{d!}(\varphi)} \\
A \arrow{r}{\eta_{A}}  & T(A)\arrow{r} &\Psi_{d!}(A')
\end{tikzcd}\hskip2cm
\begin{tikzcd}
B \arrow{r}{\psi} \arrow[swap]{d} & B' \arrow{d} \\
\Theta_d(B) \arrow{r}{\Theta_d(\psi)}& \Theta_d(B')
\end{tikzcd}
\end{equation*}
for any $\psi\in\mathrm{Hom}_{\PB(\Vu)}(B,B')$ and any $\varphi\in \mathrm{Hom}_{\PB^{\leq d}(\Vu)}(A',A)$, where the vertical arrows in the diagram on the right are the natural projections. Let $f\in \mathrm{Hom}_{\PB(\Vu)}(\Psi_{d!}(A),B)$ and $g\in\mathrm{Hom}_{\PB^{\leq d}(\Vu)}(A,\Theta_d(B))$. Then, $\Gamma_{AB}\Xi_{AB}(f)=f$ and $\Xi_{AB}\Gamma_{AB}(g)=g$ by commutativity of the diagrams below.
\begin{equation*}
\begin{tikzcd}
A \arrow{r}{\Xi(f)} \arrow[swap]{d} & B_{\leq d}\arrow{r}{\tau_B} &B\\
T(A)\arrow{rr} \arrow{urr}[swap]{\widetilde{\tau_B\circ\Xi(f)}}&&\Psi_{d!}(A)\arrow{u}[swap]{f} 
\end{tikzcd}\hskip2cm
\begin{tikzcd}
A \arrow{r}{g} \arrow[swap]{d} & B_{\leq d}\arrow{r}{\tau_B} &B\\
T(A)\arrow{rr} \arrow{urr}[swap]{\widetilde{\tau_B\circ g}}&&\Psi_{d!}(A)\arrow{u}[swap]{\Gamma(g)} 
\end{tikzcd}
\end{equation*}
\end{proof}
\medskip
Assume that $\Vu$ is  rigid so $\PB(\Vu)$ and $\PB^{\leq d}(\Vu)$ have a duality, cf. Remark \ref{rmk:duality} and Subsection \ref{subsec:dual}. We denote by $\Psi_{d*}:=\overline{{\mathbb D}}(\Psi_{d!})$ the functor from $\PB^{\leq d}(\Vu)$ to $\PB(\Vu)$ dual to $\Psi_{d!}$. On  objects  it is defined as $\Psi_{d*}(A):=(\Psi_{d!}(A^*))^*$ and on morphisms as $\Psi_{d*}(f):=(\Psi_{d!}(f^*))^*$.
\begin{corollary}\label{cor:right-adjoint}
Let $\Vu$ be a rigid braided monoidal category.   
Then, for any $d\in \NN$, the functor $\Psi_{d*}$  is right adjoint  to  $\Theta_d$.
\end{corollary}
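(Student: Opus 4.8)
\subsection*{Proof plan}

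The plan is to obtain the adjunction $\Theta_d\dashv\Psi_{d*}$ by transporting the adjunction $\Psi_{d!}\dashv\Theta_d$ of Proposition \ref{prop:adjunction} across the dualities $\mathbb{D}$ on $\PB(\Vu)$ and $\mathbb{D}_d$ on $\PB^{\leq d}(\Vu)$. This rests on the standard principle that the functor dual to a left adjoint is a right adjoint: the only ingredients needed are the natural Hom-bijections $\Hom(X,Y)\cong\Hom(\mathbb{D}Y,\mathbb{D}X)$ afforded by a duality, the natural isomorphisms $\mathbb{D}^2\cong\id$ and $\mathbb{D}_d^2\cong\id$ coming from the natural isomorphism to the double dual \cite[Theorem 3.5.8]{HS}, and the self-duality $\mathbb{D}_d\circ\Theta_d=\Theta_d\circ\mathbb{D}$ of the truncation functor recorded in Remark \ref{rem:dual-truncation}.

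Concretely, for $A\in\PB^{\leq d}(\Vu)$ and $B\in\PB(\Vu)$ I would assemble the chain of isomorphisms
\begin{align*}
\Hom_{\PB^{\leq d}(\Vu)}(\Theta_d(B), A)
&\cong \Hom_{\PB^{\leq d}(\Vu)}(\mathbb{D}_d A, \mathbb{D}_d\Theta_d(B))
= \Hom_{\PB^{\leq d}(\Vu)}(\mathbb{D}_d A, \Theta_d(\mathbb{D}B)) \\
&\cong \Hom_{\PB(\Vu)}(\Psi_{d!}(\mathbb{D}_d A), \mathbb{D}B)
\cong \Hom_{\PB(\Vu)}(B, \mathbb{D}\Psi_{d!}(\mathbb{D}_d A)) \\
&= \Hom_{\PB(\Vu)}(B, \Psi_{d*}(A)).
\end{align*}
Here the first isomorphism is the Hom-bijection of the duality $\mathbb{D}_d$; the subsequent equality is the self-duality of $\Theta_d$ from Remark \ref{rem:dual-truncation}; the second isomorphism is the adjunction $\Xi$ of Proposition \ref{prop:adjunction} applied to $\mathbb{D}_d A\in\PB^{\leq d}(\Vu)$ and $\mathbb{D}B\in\PB(\Vu)$; the third isomorphism is the Hom-bijection of the duality $\mathbb{D}$ combined with $\mathbb{D}^2 B\cong B$; and the final equality is the definition $\Psi_{d*}(A)=\mathbb{D}(\Psi_{d!}(\mathbb{D}_d A))$. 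Reading the composite off gives $\Hom_{\PB^{\leq d}(\Vu)}(\Theta_d(B), A)\cong\Hom_{\PB(\Vu)}(B, \Psi_{d*}(A))$, which is the required adjunction isomorphism.

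The main point to verify, and the only place where care is needed, is naturality of the composite in both variables $A$ and $B$. Each constituent bijection is itself natural: the Hom-bijections of $\mathbb{D}$ and $\mathbb{D}_d$ are natural because these are contravariant equivalences, the double-dual isomorphisms are natural by \cite[Theorem 3.5.8]{HS}, and $\Xi$ is natural by Proposition \ref{prop:adjunction}. The remaining bookkeeping is to track variance: each application of a duality reverses the variance in its slot, so after the two applications the composite is covariant in $A$ and in $B$, matching an adjunction with $\Theta_d$ on the left and $\Psi_{d*}$ on the right. Assembling these observations shows the composite is a natural isomorphism, whence $\Psi_{d*}$ is right adjoint to $\Theta_d$.
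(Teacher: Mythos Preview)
Your proof is correct and follows essentially the same route as the paper: both arguments chain together the duality Hom-bijections, the adjunction $\Psi_{d!}\dashv\Theta_d$ from Proposition~\ref{prop:adjunction}, and the self-duality of $\Theta_d$ from Remark~\ref{rem:dual-truncation}. The only cosmetic differences are that the paper traverses the chain in the opposite direction (starting from $\Hom_{\PB(\Vu)}(B,\Psi_{d*}(A))$) and that you are more explicit about naturality, which the paper leaves implicit.
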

\begin{proof}  
By the contravariance of $(-)^*$ and Proposition \ref{prop:adjunction} we have:
\begin{align*}
\Hom_{\PB(\Vu)}(B,\Psi_{d*}(A))&=\Hom_{\PB(\Vu)}(B, \Psi_{d!}(A^*)^*)=\Hom_{\PB(\Vu)}(\Psi_{d!}(A^*),B^*)\\
&=\Hom_{\PB^{\leq d}(\Vu)}(A^*,\Theta_d (B^*))=\Hom_{\PB^{\leq d}(\Vu)}(\Theta_d(B^*)^*,A).
\end{align*}
The statement follows from Remark \ref{rem:dual-truncation}.\end{proof}

\subsection{The approximation functor} 
Let $d\in \NN$. In order to introduce the $d$-th approximation functor, we first analyze the components of the unit in the adjunction $\Psi_{d!} \dashv \Theta_d$. For $A$ in $\PB^{\leq d}(\Vu)$ this is $\Xi_{A,\Psi_{d!}}(A)(\id_{\Psi_{d!}(A)})=(\eta_A)_{\leq d}$.

\begin{proposition}\label{prop:estensione}
Let $d\geq0$. For any object $A$ in $\PB^{\leq d}(\Vu)$, the morphism $(\eta_A)_{\leq d}\colon A\to \Theta_d(\Psi_{d!}(A))$ is an isomorphism. Therefore  $\Theta_d$ is essentially surjective. 
\end{proposition}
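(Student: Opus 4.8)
The plan is to exhibit an explicit two-sided inverse to $(\eta_A)_{\leq d}$ at the level of underlying graded objects, built from the iterated multiplication of $A$. Since $(\eta_A)_{\leq d}$ is already known to be a morphism in $\PB^{\leq d}(\Vu)$ (it is the component of the unit of the adjunction $\Psi_{d!}\dashv\Theta_d$ of Proposition \ref{prop:adjunction}), producing an inverse in $\Vu_{gr}$ suffices: the inverse of a bijective graded coalgebra map that preserves the multiplication again preserves both structures, so $(\eta_A)_{\leq d}$ will be an isomorphism in $\PB^{\leq d}(\Vu)$. Essential surjectivity of $\Theta_d$ is then immediate, since every $A$ will satisfy $A\cong\Theta_d(\Psi_{d!}(A))$.

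First I would define a morphism $\pi_A\colon T(A)_{\leq d}\to A$ in $\Vu_{gr}$ by letting its restriction to $(A^{\otimes j})_l$, for $l\leq d$, be the iterated multiplication $m_A^{(j-1)}$, and sending $A^{\otimes 0}=\mathbf 1_\Vu$ to $A_0$ via $1_\Bbbk\mapsto 1_A$. This is well defined: because we work only in weights $l\leq d$, every intermediate product occurring in $m_A^{(j-1)}$ has weight $\leq d$, so it stays within the range where $m_A$ is defined, and associativity of $m_A$ makes $\pi_A$ independent of the chosen bracketing. Here $(A^{\otimes j})_l$ may be nonzero for infinitely many $j$, since $A_0=\mathbf 1_\Vu$, so this is where the standing assumption that $\Vu$ is closed under countable direct sums is used.

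The key step is to check that $\pi_A$ annihilates $(J_A)_{\leq d}$, so that it descends to a morphism $\bar\pi_A\colon\Theta_d(\Psi_{d!}(A))\to A$. Since $J_A$ is a graded ideal whose homogeneous part in weight $l$ is spanned by elements $x\cdot r\cdot y$ with $x,y$ monomials and $r$ a generator of weight $\leq l\leq d$, it suffices to evaluate $\pi_A$ on such elements. For $r=(m_{T(A)}-m_A)(a\otimes b)$ with $a\otimes b\in(A\otimes A)_{\leq d}$ one has $x\cdot r\cdot y=x\otimes a\otimes b\otimes y-x\otimes m_A(a\otimes b)\otimes y$, and associativity of $m_A$ yields $\pi_A(x\otimes a\otimes b\otimes y)=\pi_A(x\otimes m_A(a\otimes b)\otimes y)$; for $r=1_\Bbbk-1_A$, recalling that $1_\Bbbk$ is the unit of $T(A)$, we have $x\cdot r\cdot y=x\otimes y-x\otimes 1_A\otimes y$, and the unit axiom for $m_A$ gives $\pi_A(x\otimes y)=\pi_A(x\otimes 1_A\otimes y)$. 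In either case $\pi_A(x\cdot r\cdot y)=0$. I expect this descent to be the main obstacle, precisely because $\pi_A$ is not an algebra map while $J_A$ is an infinitely generated ideal; the point that makes it work is that restricting to weight $\leq d$ keeps every relation inside the ``mod $d+1$'' regime governed by \eqref{eq:truncated}.

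Finally I would verify that $\bar\pi_A$ inverts $(\eta_A)_{\leq d}$ on both sides. The composite $\bar\pi_A\circ(\eta_A)_{\leq d}$ is the identity of $A$, because $(\eta_A)_{\leq d}$ sends $a\in A$ to the class of $a\in A^{\otimes 1}$ and $\pi_A|_{A^{\otimes 1}}=m_A^{(0)}=\id_A$. For the other composite, a monomial $z=a_1\otimes\cdots\otimes a_j$ of weight $\leq d$ satisfies $z\equiv\pi_A(z)\pmod{J_A}$: repeatedly applying the relation $a_k\otimes a_{k+1}\equiv m_A(a_k\otimes a_{k+1})$, which is legitimate since every partial product has weight $\leq d$, collapses $z$ to $m_A^{(j-1)}(z)=\pi_A(z)$ in $\Psi_{d!}(A)$, while the unit relation absorbs the factors coming from $A^{\otimes 0}$. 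Hence $(\eta_A)_{\leq d}\circ\bar\pi_A=\id$, so $(\eta_A)_{\leq d}$ is an isomorphism and $\Theta_d$ is essentially surjective.
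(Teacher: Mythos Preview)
Your argument is correct and takes a genuinely different route from the paper's. The paper proves injectivity and surjectivity of $(\eta_A)_{\leq d}$ separately: for surjectivity it observes that $\Theta_d(\Psi_{d!}(A))$ is generated as an algebra by $\eta_A(A)$, while for injectivity it passes to the tensor-degree filtration on $\Psi_{d!}(A)$, identifies the associated graded in degrees $0$ and $1$, and invokes a PBW-type argument \`a la \cite[\S 17.3, Corollary A]{humphreys} to conclude that $\eta_A$ embeds $A$. Your approach avoids this filtration entirely by writing down an explicit retraction $\bar\pi_A$ via the iterated product $m_A^{(j-1)}$ and checking directly that it kills $(J_A)_{\leq d}$ and inverts $(\eta_A)_{\leq d}$ on both sides. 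This is more elementary and self-contained: it uses only associativity and the unit axiom in $A$, together with the observation that in weight $\leq d$ every partial product stays within the range where the generating relations of $J_A$ apply. The paper's filtration argument, by contrast, situates the result in a familiar PBW framework and yields slightly more---namely that $\eta_A$ itself (not just its truncation) is injective---though at the cost of an external reference. One small remark: your opening sentence about inverses is fine but slightly overcautious, since your $\bar\pi_A$ is already built as a morphism in $\Vu_{gr}$, so no reflection principle is needed; once $\bar\pi_A$ is a two-sided inverse in $\Vu_{gr}$, compatibility with multiplication and comultiplication follows formally from that of $(\eta_A)_{\leq d}$.
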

\begin{proof}We prove injectivity of the composition of the inclusion $A\to T(A)$ with the canonical projection $T(A)\to \Psi_{d!}(A)$. The standard grading on $T(A)$, where $T^l(A)=A^{\otimes l}$, induces a filtration on $T(A)$ and $\Psi_{d!}(A)$. For $m\geq 0$, let $\Psi_{d!}^{\leq m}(A)$ be the $m$-th term in the filtration and let $\mathrm{gr}(\Psi_{d!}(A))$ be the associated graded algebra, with $m$-th component $\mathrm{gr}(\Psi_{d!}(A))_m=\Psi_{d!}^{\leq m}(A)/\Psi_{d!}^{\leq m-1}(A)$. There is a natural graded algebra morphism $\phi=\bigoplus_{m\geq0}\phi_m$ where $\phi_m\colon T^m(A)\to \mathrm{gr}(\Psi_{d!}(A))_m$ is the composition of the projection onto $\Psi^{\leq m}_{d!}(A)$ with the projection onto $\mathrm{gr}(\Psi_{d!}(A))_m$. We have $\mathrm{gr}(\Psi_{d!}(A))_0=\Bbbk$ and $\mathrm{gr}(\Psi_{d!}(A))_1={\rm Ker}(\epsilon_A)=\bigoplus_{j>0}A_j$. Then $\phi_0$ is the identity on $\Bbbk$ and $\phi_1$ is the projection $T^1(A)=A\to \bigoplus_{j>0}A_j$, i.e., it maps $a\in A$ to $a-\epsilon_A(a)$.  Arguing as in \cite[\S 17.3, Corollary A]{humphreys}, we see that the restriction of $\eta_A$ to ${\rm Ker}(\epsilon_A)$ is injective and that ${\rm Ker}(\epsilon_A)$ is a complement to $\Bbbk$ in $\Psi_{d!}(A)$. Hence, $\eta_A$ is injective.

Since $(\eta_A)_{\leq d}\colon A\to \Theta_d(\Psi_{d!}(A))$ is an algebra morphism, to prove surjectivity it is enough to show that $\Theta_d(\Psi_{d!}(A))$ is generated as an algebra by $\eta_A(A)$.  This is the case because $\Theta_d(\Psi_{d!}(A))$ is an algebra quotient of $\Psi_{d!}(A)$. 
\end{proof}

Since $(\eta_A)_{\leq d}$ is the identity in degree $1$, and since $k1_A$ is identified with $\mathbf{1}_\Vu=k1_{\Bbbk}$ we can and will identify $A$ with $\Theta_d(\Psi_{d!}(A))$ and $(\eta_A)_{\leq d}$ with the identity map.

 \medskip

We consider  the endofunctor  $$F_d:=\Psi_{d!}\circ \Theta_d\colon  \PB(\Vu)\to \PB(\Vu),$$
which we call the  {$d$-th approximation functor,} and the natural transformation $$\pi^d\colon F_d\to \id_{\CB(\Vu)}$$ which is the counit   of the adjuction $\Psi_{d!} \dashv \Theta_d$. Its component $\pi^d_B$ at $B$ in $\PB(\Vu)$ is $\Gamma_{\Theta_d(B),\Psi_{d!}(\Theta_d(B))}(\id_{\Theta_d(B)})$. 

 \begin{lemma}\label{rem:characterising}
We have the following properties:
\begin{enumerate}
\item[(1)] For any object $B$ in $\PB(\Vu)$, we have $\Theta_d (F_d (B))=\Theta_d (B)$.  
\item[(2)] For any  $B$ in $\PB(\Vu)$,  the map  $\pi^d_B$ is the unique morphism from $F_dB$ to $B$ such that  $\Theta_d(\pi_B^d)\colon \Theta_d(F_d(B))=\Theta_d (B)\to\Theta_d (B)$ is the identity map.    
\item[(3)] For any  $B,\,C$ in $\PB(\Vu)$ and any morphism $p\colon C\to B$ such that $ \Theta_d(p)\colon \Theta_d C\to \Theta_d B$ is an isomorphism,  there is a unique morphism $\widetilde{p}_B\colon F_d(B)\to C$ such that $p\circ \widetilde{p}_B=\pi^d_B$. 
\end{enumerate}
\end{lemma}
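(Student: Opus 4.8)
The plan is to derive all three properties from the adjunction $\Psi_{d!}\dashv\Theta_d$ of Proposition \ref{prop:adjunction} together with the isomorphism $(\eta_A)_{\leq d}\colon A\xrightarrow{\sim}\Theta_d(\Psi_{d!}(A))$ established in Proposition \ref{prop:estensione}. For (1), I would simply apply Proposition \ref{prop:estensione} to the object $A:=\Theta_d(B)$ of $\PB^{\leq d}(\Vu)$: it gives $\Theta_d(F_d(B))=\Theta_d(\Psi_{d!}(\Theta_d(B)))\cong\Theta_d(B)$, and under the standing identification (the paragraph following Proposition \ref{prop:estensione}, where $(\eta_A)_{\leq d}$ is treated as the identity) this is an equality. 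So (1) is essentially a restatement of essential surjectivity of $\Theta_d$ applied to a specific object.

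For (2), I would unwind the definition of the counit. The counit $\pi^d_B$ of the adjunction at $B$ is, by the standard formula, $\Gamma_{\Theta_d(B),B}(\id_{\Theta_d(B)})$, and the triangle identity for an adjunction states precisely that $\Theta_d(\pi^d_B)\circ(\eta_{\Theta_d(B)})_{\leq d}=\id_{\Theta_d(B)}$. Since we have identified $(\eta_{\Theta_d(B)})_{\leq d}$ with the identity via Proposition \ref{prop:estensione}, this reads $\Theta_d(\pi^d_B)=\id_{\Theta_d(B)}$, which is the required property. For uniqueness, suppose $h\colon F_d(B)\to B$ also satisfies $\Theta_d(h)=\id$; applying the adjunction isomorphism $\Xi_{\Theta_d(B),B}$, both $\pi^d_B$ and $h$ correspond to the same element $(h\circ\eta_{\Theta_d(B)})_{\leq d}=\Theta_d(h)=\id_{\Theta_d(B)}$ of $\Hom_{\PB^{\leq d}(\Vu)}(\Theta_d(B),\Theta_d(B))$, and since $\Xi$ is a bijection, $h=\pi^d_B$.

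For (3), given $p\colon C\to B$ with $\Theta_d(p)$ an isomorphism, I would produce $\widetilde{p}_B$ by transporting $\id$ across the adjunction. Concretely, set $\widetilde{p}_B:=\Gamma_{\Theta_d(B),C}\bigl(\Theta_d(p)^{-1}\bigr)$, using $\Theta_d(p)^{-1}\in\Hom_{\PB^{\leq d}(\Vu)}(\Theta_d(B),\Theta_d(C))$; this is a morphism $F_d(B)=\Psi_{d!}(\Theta_d(B))\to C$. To verify $p\circ\widetilde{p}_B=\pi^d_B$, I would apply $\Xi_{\Theta_d(B),B}$ to both sides and use naturality of the adjunction in the second variable (the right-hand diagram in the proof of Proposition \ref{prop:adjunction}): naturality gives $\Xi_{\Theta_d(B),B}(p\circ\widetilde{p}_B)=\Theta_d(p)\circ\Xi_{\Theta_d(B),C}(\widetilde{p}_B)=\Theta_d(p)\circ\Theta_d(p)^{-1}=\id_{\Theta_d(B)}=\Xi_{\Theta_d(B),B}(\pi^d_B)$, and injectivity of $\Xi$ yields the claim. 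Uniqueness of $\widetilde{p}_B$ follows the same way: if $p\circ q=\pi^d_B$, then $\Theta_d(p)\circ\Xi(q)=\id$, forcing $\Xi(q)=\Theta_d(p)^{-1}$, hence $q=\widetilde{p}_B$.

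The main obstacle I anticipate is bookkeeping rather than conceptual: one must be careful that the standing identification of $A$ with $\Theta_d(\Psi_{d!}(A))$ is applied consistently, so that the triangle identities of the adjunction genuinely reduce to the stated equalities on the nose rather than merely up to the canonical isomorphism $(\eta_A)_{\leq d}$. Everything else is a direct application of the defining bijection $\Xi$ and its naturality, both already available from Proposition \ref{prop:adjunction}.
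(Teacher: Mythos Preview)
Your proposal is correct and follows essentially the same route as the paper: part (1) is Proposition \ref{prop:estensione} applied to $A=\Theta_d(B)$; part (2) unwinds $\pi^d_B=\Gamma_{\Theta_d(B),B}(\id_{\Theta_d(B)})$ as the unique morphism with $\Theta_d(\pi^d_B)=\id$; and part (3) sets $\widetilde{p}_B:=\Gamma_{\Theta_d(B),C}(\Theta_d(p)^{-1})$ and checks that $p\circ\widetilde{p}_B$ satisfies the characterisation in (2). Your verification in (3) via naturality of $\Xi$ is a slight repackaging of the paper's ``$p\circ\widetilde{p}_B$ satisfies (2)'', and your explicit uniqueness argument there is more detailed than the paper's one-line ``by construction'', but the underlying strategy is identical.
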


\begin{proof}
(1) follows from Proposition \ref{prop:estensione} applied to $A=\Theta_d (B)$. 
By Proposition \ref{prop:estensione} we see that 
$\pi_B^d=\Gamma_{\Theta_d(B),\Psi_{d!}(\Theta_d(B))}(\id_{\Theta_d(B)})$ is the unique morphism  $\Psi_{d!}(\Theta_d(B))\to B$ in $\PB(\Vu)$ extending the identity on $B_{\leq d}$, i.e., such that $\Theta_d(\pi_B^d)=\id_{\Theta_d(B)}$, giving (2). 
We prove (3). Let $\widetilde{p}_B:=\Gamma_{\Theta_d(B),C}(\Theta_d(p)^{-1})$. By definition of $\Gamma$ it is the unique morphism in $\PB(\Vu)$ extending $\Theta_d(p)^{-1}$ to $F_d(B)$, so 
$p\circ \widetilde{p}_B$ satisfies (2), whence $p\circ \widetilde{p}_B=\pi^d_B$. By construction $\widetilde{p}_B$ is unique with this property. 
\end{proof}

The functor $F_d$ generalizes the construction of the $d$-th approximation from bialgebras generated in degree $1$  to all connected bialgebras:

\begin{theorem}\label{Thm1}Let $d\geq 2$. Then the restriction of the functor $F_d$ to $\CB^1(\Vu)$ is naturally isomorphic to the $d$-atic covering functor \eqref{eq:approximation}. 
\end{theorem}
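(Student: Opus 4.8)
The plan is to fix an object $B=T_!(B_1)/J$ in $\PB^1(\Vu)$ and show that the connected bialgebra $F_d(B)=\Psi_{d!}(\Theta_d(B))$ agrees with the $d$-atic cover $\widehat{B}_d=T_!(B_1)/\left(\bigoplus_{m=2}^d(J\cap T_!^m(B_1))\right)$ of \eqref{eq:approximation}, naturally in $B$. Since both constructions are functorial and a morphism in $\PB^1(\Vu)$ is determined by its degree-$1$ component, it suffices to produce a natural isomorphism of bialgebras $F_d(B)\simeq\widehat{B}_d$ and check that it commutes with the degree-$1$ data; I expect the naturality to be essentially automatic once the object-level isomorphism is built from universal properties.

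First I would unwind $\Theta_d(B)$. For $B$ generated in degree one, $\Theta_d(B)=B_{\leq d}=\bigoplus_{l=0}^d B_l$ is the truncated algebra and coalgebra, and by Proposition \ref{prop:estensione} the natural map $(\eta_{\Theta_d(B)})_{\leq d}$ identifies $\Theta_d(B)$ with $\Theta_d(F_d(B))$, so $F_d(B)$ and $B$ already share their degree $\leq d$ parts. Next I would analyze $\Psi_{d!}(\Theta_d(B))=T(\Theta_d(B))/J_{\Theta_d(B)}$. The key reduction is that, because $B_{\leq d}$ is generated in degrees $\leq d$ but as an algebra really only needs its degree-$1$ part together with the relations appearing in degrees $2,\dots,d$, I would compare $T(\Theta_d(B))$ with the ordinary tensor algebra $T_!(B_1)$. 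Concretely, the relations in $J_{\Theta_d(B)}$ coming from $(m_{T(\Theta_d(B))}-m_{\Theta_d(B)})(\Theta_d(B)\otimes\Theta_d(B))_{\leq d}$ force the higher-degree generators $B_2,\dots,B_d$ to be expressed, inside $\Psi_{d!}(\Theta_d(B))$, in terms of products of degree-$1$ elements exactly according to the multiplication of $B$ up to degree $d$. This is what makes $\Psi_{d!}(\Theta_d(B))$ generated in degree one and presented by precisely the relations of $B$ that live in degrees $\leq d$, i.e. by $\bigoplus_{m=2}^d(J\cap T_!^m(B_1))$.

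The cleanest way to make this rigorous is via the universal properties rather than a direct quotient computation. I would construct the comparison map using Lemma \ref{rem:characterising}(3): the natural projection $p\colon \widehat{B}_d\to B$ (the surjection $T_!(B_1)/(\bigoplus_{m=2}^d(J\cap T_!^m(B_1)))\twoheadrightarrow T_!(B_1)/J$) is a bialgebra morphism which is an isomorphism in degrees $\leq d$, hence $\Theta_d(p)$ is an isomorphism; so there is a unique $\widetilde{p}_B\colon F_d(B)\to\widehat{B}_d$ with $p\circ\widetilde{p}_B=\pi^d_B$. For the reverse direction I would observe that $\widehat{B}_d$ itself is generated in degree one and its defining relations are exactly the relations of $\Theta_d(B)$ in degrees $\leq d$, which by the adjunction $\Psi_{d!}\dashv\Theta_d$ of Proposition \ref{prop:adjunction} gives a bialgebra morphism $\widehat{B}_d\to\Psi_{d!}(\Theta_d(B))=F_d(B)$ extending the identity on $B_1$. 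Then I would verify the two composites are the identity by checking agreement in degree one and invoking that morphisms out of a degree-one-generated bialgebra are determined by their degree-one part.

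The main obstacle I anticipate is showing that $F_d(B)$ is genuinely generated in degree one and carries no relations beyond those in $\bigoplus_{m=2}^d(J\cap T_!^m(B_1))$; that is, proving that $\widetilde{p}_B$ is injective, equivalently that the only relations imposed on the degree-one generators inside $\Psi_{d!}(\Theta_d(B))$ are the ones forced in degrees $\leq d$. The filtration/associated-graded argument used in the proof of Proposition \ref{prop:estensione} is the right tool here: it controls the passage from $T(\Theta_d(B))$ to $\Psi_{d!}(\Theta_d(B))$ and should show that higher-degree generators contribute nothing new beyond what the degree-$\leq d$ relations already encode. Once that freeness-up-to-the-expected-relations statement is in hand, matching the presentations of $F_d(B)$ and $\widehat{B}_d$ and the naturality in $B$ are straightforward, since both are pinned down by their degree-one components.
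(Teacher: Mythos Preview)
Your overall strategy---build maps in both directions between $F_d(B)$ and $\widehat{B}_d$ and verify the two composites are the identity---is exactly the paper's, and your use of Lemma~\ref{rem:characterising}(3) to obtain $\widetilde{p}_B\colon F_d(B)\to\widehat{B}_d$ matches it as well (the hypothesis that $\Theta_d(p)$ is an isomorphism is the computation $\Theta_d(\widehat{B}_d)=\Theta_d(B)$, which the paper carries out explicitly). The gap is in your construction of the reverse map. The adjunction $\Psi_{d!}\dashv\Theta_d$ yields bijections $\Hom_{\PB(\Vu)}(\Psi_{d!}(A),C)\simeq\Hom_{\PB^{\leq d}(\Vu)}(A,\Theta_d(C))$ and therefore produces morphisms \emph{out of} $F_d(B)=\Psi_{d!}(\Theta_d(B))$, never \emph{into} it; it cannot manufacture a map $\widehat{B}_d\to F_d(B)$. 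The paper instead uses the universal property of $T_!(B_1)$: the inclusion $B_1\hookrightarrow\Theta_d(B)\hookrightarrow T(\Theta_d(B))\twoheadrightarrow F_d(B)$ extends to a bialgebra map $\widehat{\zeta_B}\colon T_!(B_1)\to F_d(B)$, and one checks that the generators $J\cap T_!^m(B_1)$ of the defining ideal of $\widehat{B}_d$ die for $m\leq d$ because, by Proposition~\ref{prop:estensione}, the multiplication of $F_d(B)$ in degrees $\leq d$ coincides with that of $B$. This gives $\overline{\zeta}_B\colon\widehat{B}_d\to F_d(B)$.

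Your anticipated ``main obstacle'' then evaporates, and the filtration argument you propose is unnecessary. That $F_d(B)$ is generated in degree one is immediate: $\Psi_{d!}(\Theta_d(B))$ is a quotient of $T(\Theta_d(B))$, hence generated by the image of $\Theta_d(B)$, and $\Theta_d(B)$ is generated by $B_1$ since $B$ is and $\Theta_d(B)$ is an algebra quotient of $B$. This is precisely the surjectivity of $\overline{\zeta}_B$. With both maps in hand, the composite on $\widehat{B}_d$ is the identity by your degree-one argument, and the composite on $F_d(B)$ is the identity because it corresponds under the adjunction bijection \eqref{eq:Gamma} to $\id_{\Theta_d(B)}$---this is where the adjunction is actually used.
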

\begin{proof}  
Let $B=T_!(B_1)/J\in \CB^1(\Vu)$ and let $\widehat{B}_d=T_!(B_1)/(\bigoplus_{j\leq d}T_!^j(B_1)\cap J)$ be its $d$-th cover. Let $\xi_B$ be  the composition of the inclusions $B_1\subset B_{\leq d}$ and $B_{\leq d}\subset T(\Theta_d(B))$ and let $\zeta_B$ be the composition of $\xi_B$ with the natural projection $\pi_B\colon T(\Theta_d(B))\to\Psi_{d!}(\Theta_d(B))=F_d(B)$. By the universal property of $T_!(B_1)$, there are unique morphisms $\widehat{\xi_B}$ and $\widehat{\zeta_B}$ in $\PB(\Vu)$ for which the diagram below is commutative.
\begin{equation*}
\begin{tikzcd}
B_1 \arrow{r}{\xi_B} \arrow[swap]{dr} & T_!(\Theta_d(B))\arrow{r}{\pi_B}&F_d(B)\\
&T_!(B_1)\arrow{u}{\widehat{\xi_B}} \arrow{ur}[swap]{\widehat{\zeta_B}}
\end{tikzcd}
\end{equation*}
We show that $\widehat{\zeta_B}$ factors though $\widehat{B}_d$.
By definition of $J_{\Theta_d(B)}$ and Proposition \ref{prop:estensione}, and with the appropriate identifications, for $j\leq d$ and $b_1,\,\ldots,\,b_j\in B_1$ there holds \begin{equation*}\pi_B(b_1\otimes\cdots\otimes b_j)=\pi_B(m_{\Theta_d(B)}^{(j-1)}(b_1\otimes\cdots\otimes b_j))=m_{\Theta_d(B)}^{(j-1)}(b_1\otimes\cdots\otimes b_j)=m_B^{(j-1)}(b_1\otimes\cdots\otimes b_j),\end{equation*}
hence for any $j\leq d$ and any $b\in T_!^j(B_1)\cap J$ there holds 
$\pi_B\widehat{\xi_B}(b)=\pi_B(m^{(j-1)}_Bb)=0$, whence $\pi_B\widehat{\xi_B}(\bigoplus_{j\leq d}T_!^j(B_1)\cap J)=0$. Therefore there is a unique morphism $\overline{\zeta}_B\colon \widehat{B}_d\to F_d(B)$ in $\PB(\Vu)$ for which the diagram below is commutative
\begin{equation*}
\begin{tikzcd}
B_1 \arrow{r}{\xi_B} \arrow[swap]{dr} & T_!(\Theta_d(B))\arrow{r}{\pi_B}&F_d(B)\\
&T_!(B_1)\arrow{u}{\widehat{\xi_B}} \arrow{ur}{\widehat{\zeta_B}}\arrow{r}&\widehat{B}_d\arrow{u}{\overline{\zeta}_B}
\end{tikzcd}
\end{equation*}
In this way we have constructed a natural morphism of functors $\overline{\zeta}\colon\widehat{(\ )}_d\to F_d$.

Now, as an algebra $\Theta_d(B)$ is a quotient of $B$, hence it is generated by $B_1$. Therefore $F_d(B)$ is generated by $\widehat{\zeta_B}(B_1)$, and so $\widehat{\zeta_B}$ is surjective, whence  $\overline{\zeta_B}$ is surjective. We prove injectivity. Observe that
\begin{align*}
\Theta_d(\widehat{B}_d)&=(T_!(B_1)/(\bigoplus_{j\leq d}T^j_!(B_1)\cap J))/(\bigoplus_{l>d}\widehat{B}_d)_l=T_!(B_1)/(\bigoplus_{j>d}T^j_!(B_1)\oplus(\bigoplus_{j\leq d}T^j_!(B_1)\cap J)),\\
\Theta_d(B)&=(T_!(B_1)/J)/\bigoplus_{l>d}B_l=T_!(B_1)/(\bigoplus_{j>d}T^j_!(B_1)+J),\end{align*} respectively. Since $(\bigoplus_{j\leq d}T^j_!(B_1)\cap J))+T_!^{\geq d}(B_1)=J+T_!^{\geq d}(B_1)$, the two algebras coincide.
Let $p_B\colon \widehat{B}_d\to B$ be the natural projection. 
Then, the map $\Theta_d(p_B)$ is the identity, because  it is induced from the identity on $B_1$. 
Let $\widetilde{p}_B\colon F_dB\to \widehat{B}_d$ be the morphism in $\PB(\Vu)$ whose existence is guaranteed by Lemma \ref{rem:characterising} (3). Then $\widetilde{p}_B\circ\overline{\zeta}_B\in \mathrm{Hom}_{\PB(\Vu)}(F_dB,F_dB)$ and it corresponds to the identity in $\mathrm{Hom}_{\PB^{\leq d}(\Vu)}(\Theta_d(B),\Theta_d(B))$ through \eqref{eq:Gamma}. By Proposition \ref{prop:adjunction}, we have $\widetilde{p}_B\circ\overline{\zeta}_B=\id_{F_dB}$. So, $\overline{\zeta}_B$ is injective and $\overline{\zeta}$ is a natural isomorphism of functors. 
\end{proof}

\begin{example}
  {\rm  
  \begin{enumerate}
      \item If $V$ is an object in $\Vu$ concentrated in degree $1$ then the $d$-th approximation of $T_!(V)$  is $T_!(V)$ for any $d\geq 1$. 
      \item  In the category of vector spaces with usual flip operator, if $V$ has basis $\{v_1,\,\ldots,\,v_n\}$, then $F_2(T_*(V))$ is  generated as an algebra by the variables $x_i$, $x_{ij}$ for $i,j\in\{1,\,\ldots,\,n\}$ with relations $x_{ij}x_{kl}=x_{ij}x_l=x_lx_{ij}=x_ix_j-x_{ij}-x_{ji}=0$ for $i,j,k,l\in\{1,\,\ldots,\,n\}$. The coproduct is determined by $\Delta(x_i)=1\otimes x_i+x_i\otimes 1$ and $\Delta(x_{ij})=1\otimes x_{ij}+x_i\otimes x_j+x_{ij}\otimes 1$.
      \item The $0$-th approximation  of any $A\in\PB(\Vu)$ is $\mathbf{1}_\Vu$.
      \item The first approximation  of $A\in\PB(\Vu)$ is  $F_1(A)=T_!(A_1)$.
        \end{enumerate}}
\end{example}

Note that $A$ is finitely presented if and only if $A=F_d(A)$ for some $d$ with $\dim A_{\leq d}<\infty$. Indeed, the condition $A=F_d(A)$ implies that the relations are generated up to degree  $d$, while the condition $\dim A_{\leq d}<\infty$ implies that one can choose finitely many generators.

\section{Some equivalences of categories}

\subsection{Inverse limits of categories} 
We call a {projective system of categories  indexed by} $\NN$ a pair $(({\mathcal C}_d)_{d\in {\mathbb N}},(\pi_{de})_{d\leq e})$
where $({\mathcal C}_d)_{d\in {\mathbb N}}$ is a collection of categories indexed by ${\mathbb N}$, and   $\pi_{de}\colon{\mathcal C}_e\to {\mathcal C}_d$ for $d\leq e$ are functors satisfying $\pi_{de}\circ\pi_{ef}=\pi_{df}$ if $d\leq e\leq f$ and $\pi_{dd}=\id$ for all $d\in \NN$.

In the spirit of \cite[\S 5.1]{Schn}, we define the {inverse limit category} $\varprojlim_{d \in \mathbb{N}} \mathcal{C}_d$ \label{limit}, as the category whose objects are pairs $\left((E_d)_{d \in \mathbb{N}}, (\phi_{de})_{d \leq e}\right)$, where each $E_d$ is an object in $\mathcal{C}_d$, and each $\phi_{de}$ is an isomorphism $\phi_{de} \colon \pi_{de}(E_e) \to E_d$ in $\mathcal{C}_d$, satisfying the compatibility condition $\phi_{de} \circ \pi_{de}(\phi_{ef}) = \phi_{df}$ for $d \leq e \leq f$. In particular, $\phi_{ee}\circ\phi_{ee}=\phi_{ee}$. Hence, in the categories we will be interested in,  the maps $\phi_{ee}$ will be the identity.  Morphisms from $\left((E_d)_{d\in \NN},(\phi_{de})_{d\leq e}\right)$ to $\left((E'_d)_{d\in \NN},(\phi'_{de})_{d\leq e}\right)$ are defined as families of morphisms $\psi_d\colon E_d\to E_d'$ in ${\mathcal C}_d$ for each $d\in {\mathbb N}$ such that the diagram below commutes for each $d\leq e$:
\begin{equation}\label{eq:CDlimit}
\begin{CD}
\pi_{de}(E_e)@>{\phi_{de}}>>E_d\\
@V{\pi_{de}(\psi_e)}VV @VV{\psi_d}V\\
\pi_{de}(E'_e)@>{\phi'_{de}}>>E'_d\\
\end{CD}
\end{equation}
where the vertical arrow on the left is obtained by applying the functor $\pi_{de}$ to the morphism $\psi_e\colon E_e\to E_e'$.
This construction is a special case of the {quasi-limit}, \cite[p. 217]{G}.

\bigskip

The truncation $\bigoplus_{j\leq d}A_j\mapsto\bigoplus_{j\leq c}A_j$ induces a family of functors $(\ )_{\leq c,d}\colon \PB^{\leq d}(\Vu)\to \PB^{\leq c}(\Vu)$ for $c\leq d$, and $((\PB^{\leq d}(\Vu))_{d\in\mathbb N}, (\ )_{\leq c,d})_{c\leq d})$ is a projective system of categories.  Observe that $(\ )_{\leq c,d}\Theta_d=\Theta_c$ for any $c\leq d$. 

\begin{proposition}\label{prop:limit-PB}
The inverse limit  category $\varprojlim_{d\in {\mathbb N}} \PB^{\leq d}(\Vu)$ is equivalent to $\PB(\Vu)$. 
\end{proposition}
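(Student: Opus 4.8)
The plan is to construct an explicit equivalence of categories
\[
\Phi\colon \PB(\Vu)\longrightarrow \varprojlim_{d\in\NN}\PB^{\leq d}(\Vu)
\]
by sending a connected bialgebra $A$ to the compatible family of its truncations $(\Theta_d(A))_{d\in\NN}$, and to exhibit a quasi-inverse using the extension functors $\Psi_{d!}$ together with the approximation tower. First I would define $\Phi$ on objects: for $A\in\PB(\Vu)$ set $E_d:=\Theta_d(A)$, and take the connecting isomorphisms $\phi_{de}\colon (\ )_{\leq c,d}\,\Theta_e(A)=\Theta_d(A)\to\Theta_d(A)$ to be the identity maps, which is legitimate precisely because $(\ )_{\leq c,d}\circ\Theta_d=\Theta_c$ as noted just before the statement. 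On morphisms, $\Phi(f):=(\Theta_d(f))_{d\in\NN}$, and compatibility with the squares \eqref{eq:CDlimit} is immediate since all $\phi_{de}$ are identities and truncation is functorial. This shows $\Phi$ is a well-defined functor.

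The heart of the argument is constructing the quasi-inverse $\Lambda$. Given an object $\left((E_d)_{d},(\phi_{de})_{d\le e}\right)$ of the inverse limit, the compatible family of the $E_d$ under the truncation maps should assemble into a single graded object $\bigoplus_{j\ge0}(E_j)_j$ in $\Vu_{gr}$, where $(E_j)_j$ denotes the top graded piece of $E_j$; the isomorphisms $\phi_{de}$ guarantee that the lower graded pieces of $E_e$ agree with $E_d$, so the multiplication and comultiplication of each $E_d$ glue to a single multiplication and comultiplication on $\Lambda(E)=\bigoplus_j (E_j)_j$. The key point is that the bialgebra axiom \eqref{eq:truncated} holds in each $\Theta$-truncation up to degree $d$ for every $d$, and since any homogeneous identity lives in some bounded degree, the full bialgebra compatibility holds for $\Lambda(E)$; hence $\Lambda(E)\in\PB(\Vu)$. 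On morphisms one glues the $\psi_d$ componentwise, which is well-defined by the commuting squares \eqref{eq:CDlimit}.

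It then remains to check that $\Phi$ and $\Lambda$ are mutually quasi-inverse. For $\Lambda\circ\Phi\simeq\id$: starting from $A$, taking truncations and regluing recovers $A$ graded piece by graded piece, with multiplication and comultiplication restored since each was obtained by restriction from $A$; this gives a natural isomorphism to the identity. For $\Phi\circ\Lambda\simeq\id$: starting from $(E_d,\phi_{de})$, one checks $\Theta_d(\Lambda(E))\cong E_d$ naturally, using again that $\Theta_d$ reads off exactly the degrees $\le d$ that were used to build $\Lambda(E)$, and that the connecting isomorphisms match. I would record these two natural isomorphisms and verify their naturality squares, which are routine once the objects are correctly identified.

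The main obstacle I anticipate is verifying that the regluing $\Lambda$ genuinely produces a bialgebra in the full (untruncated) sense rather than merely a compatible family: one must argue carefully that associativity, coassociativity, and the compatibility \eqref{eq:truncated} — each a statement about finitely many graded components — are implied by their truncated counterparts holding for all $d$, and that the multiplication maps defined on the various $E_d$ are mutually compatible under the isomorphisms $\phi_{de}$ so that they patch to globally well-defined structure maps on $\bigoplus_j(E_j)_j$. This bookkeeping, controlling how the $\phi_{de}$ intertwine the algebra and coalgebra structures across the tower, is where the argument requires the most care; everything else reduces to the functoriality of $\Theta_d$ and the identity $(\ )_{\leq c,d}\circ\Theta_d=\Theta_c$.
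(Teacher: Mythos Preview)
Your proposal is correct and follows essentially the same route as the paper: the paper defines the same functor $\Theta=\Phi$ on objects by $A\mapsto((\Theta_d(A))_d,\id)$ and on morphisms by $f\mapsto(\Theta_d(f))_d$, and proves equivalence by showing $\Theta$ is fully faithful and essentially surjective, the latter via the very same gluing $\widetilde{E}=\bigoplus_{d\ge0}(E^d)_d$ that you describe for your quasi-inverse $\Lambda$. The only cosmetic difference is that the paper packages the argument as ``fully faithful $+$ essentially surjective'' rather than exhibiting $\Lambda$ as an explicit quasi-inverse and checking both compositions; also, your opening line mentions the extension functors $\Psi_{d!}$, but you do not (and need not) actually use them in the construction you go on to give.
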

\begin{proof}
We define a functor $\Theta\colon \PB(\Vu)\to \varprojlim_{d\in {\mathbb N}} \PB^{\leq d}(\Vu)$ as follows. For an object $A$ in $\PB(\Vu)$ we set
$\Theta(A):=\left((\Theta_d(A))_{d\geq0},\, \id_{c\leq d}\right)$ where $\id_{c\leq d}$ denotes the identity morphism in $\PB^{\leq c}(\Vu)$. For a morphism $f\colon A\to B$ in $\PB(\Vu)$, we set ${\Theta}(f):=(\Theta_d(f)_{d\in\NN})\colon\Theta(A)\to \Theta(B)$. By construction the functor $\Theta$ is fully faithful. 

We show that $\Theta$ is essentially surjective.  Let $\left((E^d)_{d\geq0}, (\phi_{cd})_{c\leq d}\right)$ be an object in $\varprojlim_{d\in {\mathbb N}} \PB_{\leq d}(\Vu)$, with $E^d=\bigoplus_{i=0}^dE^d_i$. We define the object  $\widetilde{E}$ of $\Vu_{gr}$ as follows: $\widetilde{E}=\bigoplus_{d\geq 0}E^d_d$. Then $\widetilde{E}_0=E_0^0={\mathbf 1}_\Vu$. The morphisms 
$$m_{i,j}:=m_{E^{i+j}}\circ(\phi_{i,i+j}^{-1}\otimes\phi^{-1}_{j,i+j})\colon\widetilde{E}_i\otimes \widetilde{E}_j=E_i^i\otimes E_j^j\to E_{i+j}^{i+j}=\widetilde{E}_{i+j}$$
combine to a graded, associative multiplication morphism on $\widetilde{E}$.  Similarly, the maps 
$$\widetilde{\Delta}_{d}:=\bigoplus_{i+j=d}(\phi_{i,d}\otimes\phi_{j,d})\Delta_d\colon \widetilde{E}_d=E_d^d\to \bigoplus_{i+j=d}E_i^i\otimes E_j^j=\bigoplus_{i+j=d}\widetilde{E}_i\otimes \widetilde{E}_j$$
combine to a graded, coassociative comultiplication morphism on $\widetilde{E}$. The counit is given by taking the counit on each $E_d^d$. A tedious computation shows that $\widetilde{E}$ is an object in $\PB(\Vu)$ and that the sequence of bialgebra morphisms $f^d\colon \widetilde{E}_{\leq d}\to E^d$ given by $f^d:=\bigoplus_{i\leq d}f^d_l=\bigoplus_{l\leq d}\phi_{ld}^{-1}|_{E^d_l}$ combine to give an isomorphism $\Theta(\widetilde{E})\to \left((E^d)_{d\geq0}, (\phi_{cd})_{c\leq d}\right)$.
\end{proof}

\subsection{An alternative category of truncated bialgebras}

We describe here an alternative approach to the truncated versions of $\PB(\Vu)$. Let $d$ be a non-negative integer.  
The $d$-{th truncated category} ${\mathcal V}_{\leq d}$ of ${\mathcal V}_{gr}$ is the category with the same objects as  ${\mathcal V}_{gr}$ and morphisms $f=(f_j)_{0\leq j\leq d}$, i.e., only in degrees  less than or equal to $d$. 

In other words, we disregard graded components corresponding to terms greater than $d$. In the language of \cite[2.2]{C}, the category ${\mathcal V}_{\leq d}$ is the braided monoidal category obtained from $\Vu_{gr}$ by taking the quotient with respect to the tensor ideal consisting of graded morphisms that are $0$ up to degree $d$.  The unit in ${\mathcal V}_{\leq d}$ is the same as in  ${\mathcal V}_{gr}$. The 
braiding in ${\Vu}_{\leq d}$ is given by 
$\varphi_{A,B}=(\bigoplus_{i+j=k}c_{A_i,B_j})_{0\leq k\leq d}$,  where $c_{A_i,B_j}$ is the braiding in $\Vu$. Note that ${\mathcal V}_{\leq 0}=\Vu$ with the usual unit and braiding.

\medskip

If $\Vu$ is rigid, then each category $\Vu_{\leq d}$ is rigid, 
by taking graded duals. By the discussion in Section \ref{subsec:dual}, the category  $\PB(\Vu_{\leq d})$ of connected bialgebras in $\Vu_{\leq d}$ has then a duality.

\begin{proposition}\label{prop:equivalence-bialgebras}
Let $d\in \mathbb N$. The identity functor on $\Vu_{gr}$ induces 
an equivalence of categories $\mathcal R_d\colon  \PB^{\leq d}(\Vu)\to \PB(\Vu_{\leq d})$. If $\Vu$ is rigid, $\mathcal R_d$ is compatible with duality. 
\end{proposition}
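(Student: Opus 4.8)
The plan is to write $\mathcal{R}_d$ down explicitly and then check it is fully faithful and essentially surjective, the whole argument resting on the single observation that morphisms in $\Vu_{\leq d}$ only record graded components up to degree $d$, and that composition and tensor product there are computed degree-by-degree. First I would describe $\mathcal{R}_d$ on objects. The quotient functor $q_d\colon\Vu_{gr}\to\Vu_{\leq d}$, which is the identity on objects and truncation on morphisms, is braided monoidal. An object $A=\bigoplus_{l=0}^d A_l$ of $\PB^{\leq d}(\Vu)$ is an algebra and coalgebra in $\Vu_{gr}$, concentrated in degrees $\leq d$, whose structure maps satisfy the compatibility \eqref{eq:truncated} only up to degree $d$. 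Applying $q_d$, the maps $m_A,\eta_A,\Delta_A,\epsilon_A$ become morphisms in $\Vu_{\leq d}$; since the associativity, coassociativity and (co)unit axioms hold for $A$ up to degree $d$ and hold trivially in degrees $>d$ (where $A$ vanishes), they become genuine axioms in $\Vu_{\leq d}$, and \eqref{eq:truncated} becomes the full bialgebra compatibility condition there. With $A_0=\mathbf{1}_\Vu$ this exhibits $\mathcal{R}_d(A)$ as an object of $\PB(\Vu_{\leq d})$. On a morphism $f=(f_j)_{0\leq j\leq d}$ I set $\mathcal{R}_d(f)=q_d(f)=f$, which is precisely the data of a bialgebra morphism in $\Vu_{\leq d}$.

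Full faithfulness is then immediate: for $A,A'$ concentrated in degrees $\leq d$, both $\Hom_{\PB^{\leq d}(\Vu)}(A,A')$ and $\Hom_{\PB(\Vu_{\leq d})}(\mathcal{R}_d A,\mathcal{R}_d A')$ consist of graded families $(f_j)_{0\leq j\leq d}$ satisfying the same compatibility with multiplication and comultiplication, the conditions in $\Vu_{gr}$ and in $\Vu_{\leq d}$ agreeing because all objects vanish above degree $d$. For essential surjectivity, given $B=\bigoplus_{j\geq0}B_j$ in $\PB(\Vu_{\leq d})$ I would truncate to $B_{\leq d}$, endowed with $(m_B)_{\leq d},(\Delta_B)_{\leq d}$ and the truncated (co)unit; since the bialgebra axioms for $B$ constrain only degrees $\leq d$, this $B_{\leq d}$ is an object of $\PB^{\leq d}(\Vu)$. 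Finally, the identity in degrees $\leq d$ defines a morphism $B\to\mathcal{R}_d(B_{\leq d})$ in $\Vu_{\leq d}$ which is an isomorphism precisely because morphisms in $\Vu_{\leq d}$ ignore degrees $>d$, and it is a bialgebra morphism since the structure maps of $B$ and of $B_{\leq d}$ coincide in degrees $\leq d$; hence $B\cong\mathcal{R}_d(B_{\leq d})$.

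For the duality statement, I would observe that both dualities are graded duals taken component-by-component: $\mathbb{D}_d$ on $\PB^{\leq d}(\Vu)$ sends $A$ to $\bigoplus_{l=0}^d A_l^*$ with transposed structure maps as in Remark \ref{rmk:duality}, and the duality on $\PB(\Vu_{\leq d})$ coming from rigidity of $\Vu_{\leq d}$ is defined in exactly the same way, using the natural isomorphisms $\varphi_{V,W}\colon V^*\otimes W^*\to (V\otimes W)^*$. Since $\mathcal{R}_d$ acts as the identity on underlying graded objects and on structure morphisms, it intertwines the two dualities, giving a natural isomorphism $\mathcal{R}_d\circ\mathbb{D}_d\cong\mathbb{D}'_d\circ\mathcal{R}_d$; I would make this explicit on objects and morphisms via the $\varphi_l$ of Remark \ref{rmk:duality}.

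The main obstacle is not a single hard computation but the bookkeeping: making precise, and keeping consistent throughout, the dictionary between ``an axiom holds up to degree $d$ in $\Vu_{gr}$'' and ``the axiom holds in $\Vu_{\leq d}$''. Once the key fact is isolated—that in $\Vu_{\leq d}$ all graded components above degree $d$ are invisible to composition and tensor product, which simultaneously promotes the truncated compatibility \eqref{eq:truncated} to a genuine bialgebra axiom and makes every object isomorphic to its degree-$\leq d$ truncation—well-definedness, full faithfulness, and essential surjectivity all follow formally.
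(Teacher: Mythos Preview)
Your proposal is correct and follows essentially the same approach as the paper: define $\mathcal{R}_d$ via the quotient functor $\Vu_{gr}\to\Vu_{\leq d}$, observe that the truncated compatibility \eqref{eq:truncated} becomes the full bialgebra axiom in $\Vu_{\leq d}$, deduce full faithfulness from the fact that morphisms in both categories are the same data, and obtain essential surjectivity because every object in $\Vu_{\leq d}$ is isomorphic to its degree-$\leq d$ truncation. Your treatment of the duality statement also matches the paper's, just with more explicit bookkeeping.
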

\begin{proof} 
An object $A$ in $\PB^{\leq d}(\Vu)$ is an object in $\Vu_{gr}$, hence an object in $\Vu_{\leq d}$, and $A_0={\mathbf 1}_\Vu={\mathbf 1}_{\Vu_{\leq d}}$. The multiplication $m_A$, the comultiplication $\Delta_A$ and the counit $\epsilon_A$ of $A$ are morphisms in $\Vu_{\leq d}$ with which $A$ becomes a graded algebra and coalgebra in $\Vu_{\leq d}$. Finally, \eqref{eq:truncated} implies that $A$ is a connected bialgebra in $\Vu_{\leq d}$. Morphisms in $\PB^{\leq d}(\Vu)$ are graded algebra and coalgebra morphisms in $\Vu_{\leq d}$, hence the identity functor induces a fully faithful functor $\PB^{\leq d}(\Vu)\to \PB(\Vu_{\leq d})$. Note that any algebra $A$
in ${\mathcal V}_{\leq d}$ is isomorphic to its truncation $A_{\leq d}$, hence the functor is essentially surjective. The statement about duality follows because duality in $\Vu_{gr}$ and $\Vu_{\leq d}$ coincides on objects and morphisms up to degree $d$. 
\end{proof}

For $c\leq d$, let  $\pi_{cd} \colon \Vu_{\leq d}\to \Vu_{\leq c}$ be the functors given by the identity on objects and on morphisms by disregarding the components greater than $c$. Then $((\Vu_{\leq d})_{d\in\NN}, (\pi_{cd})_{c\leq d})$ is a projective system of categories whose limit  $\varprojlim_{d\in {\mathbb N}} \Vu_{\leq d}$ is equivalent to $\Vu_{gr}$. By construction, for the restriction of $\pi_{cd}$ to $\PB(\Vu_{\leq d})$ there holds $\mathcal R_c(\ )_{\leq c,d}=\pi_{cd}\mathcal R_d$. Then, by Proposition \ref{prop:limit-PB} the inverse limit of the projective system $((\PB^{\leq d}(\Vu))_{d\geq 1},(\pi^{cd})_{c\leq d})$ is $\PB(\Vu)$.

\section{Compatibility with equivalences}
In this section, we  show that the $d$-th approximation functors behave well with respect to equivalence of braided categories and we apply it to the equivalence between categories of Yetter-Drinfeld modules of Hopf algebras related by a cocycle-twist,  \cite{MO,AFGV}.

\medskip

\begin{theorem}
    \label{thm:equivaprox}
    Let $(\mathcal{T},\omega)$ be an equivalence of braided categories from $\Vu$ to $\Vu'$ with quasi-inverse $(\mathcal{S},\alpha)$. Let $\Upsilon$ be a natural isomorphism from $\mathcal{S}\circ \mathcal{T}$ to the identity functor $\id_{\Vu}$.  For $d\in \NN$, let $\mathcal{T}\colon\CB(\Vu)\to \CB(\Vu')$,  and $\mathcal{T}^d\colon\CB^{\leq d}(\Vu)\to \CB^{\leq d}(\Vu')$, $\mathcal{S}\colon\CB(\Vu') \to \CB(\Vu)$ and $\mathcal{S}^d\colon\CB^{\leq d}(\Vu') \to \CB^{\leq d}(\Vu)$ be the induced functors. Let $\Theta_d \colon  \PB(\Vu)\to \PB^{\leq d}(\Vu)$,\,$\Theta'_d \colon  \PB(\Vu')\to \PB^{\leq d}(\Vu')$,
$\Psi_{d!}\colon \PB^{\leq d}(\Vu)\to \PB(\Vu)$, $\Psi'_{d!}\colon \PB^{\leq d}(\Vu')\to \PB(\Vu')$,
$F_d\colon  \PB(\Vu)\to \PB(\Vu)$ and $F'_d\colon  \PB(\Vu')\to \PB(\Vu')$ denote the $d$-th truncation, $d$-th extension and $d$-th approximation functors in $\Vu$ and $\Vu'$,  respectively. Then
    \begin{enumerate}
        \item $\bigoplus_{i=0}^d\Upsilon$ is a natural isomorphism from $\SSS^d\circ \Theta'_d\circ \mathcal{T}$ to $\Theta_d$;
        \item $\Psi_{d!}$ is naturally isomorphic to  $\SSS\circ \Psi'_{d!}  \circ \mathcal{T}^d$;
        \item $F_d$ is naturally isomorphic to $\SSS\circ  F'_d    \circ \mathcal{T}$.
        \item For $A$ in $\PB(\Vu)$, we have $A\simeq F_d(A)$ if and only if $\mathcal T(A)\simeq F_d'\mathcal T(A)$.
 \item  If $\mathcal T\circ\mathcal S=\id_{\Vu'}$ and $\mathcal S\circ\mathcal T=\id_{\Vu}$, that is, if $\Vu$ and $\Vu'$ are isomorphic, then we have uniquely determined identifications
 \begin{equation}\Theta_d=\SSS^d\circ \Theta'_d\circ \mathcal{T}, \quad \Psi_{d!}=\SSS\circ \Psi'_{d!}  \circ \mathcal{T}^d, \quad F_d=\SSS\circ  F'_d    \circ \mathcal{T}.\end{equation}
    \end{enumerate}
 \end{theorem}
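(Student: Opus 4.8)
The plan is to take part (1) as the foundational step, deduce (2) formally from the uniqueness of adjoints, obtain (3) by composing the isomorphisms of (1) and (2), read off (4) as an immediate consequence of (3), and finally specialize to (5) by observing that every natural isomorphism produced along the way collapses to an identity once $\mathcal{T}$ and $\mathcal{S}$ are mutually inverse.

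For part (1), the first step is to record the strict commutation $\Theta'_d\circ\mathcal{T}=\mathcal{T}^d\circ\Theta_d$ of functors $\PB(\Vu)\to\PB^{\leq d}(\Vu')$. This holds at the level of graded objects because, by Remark \ref{rmk:bmbmf} and Remark \ref{rmk:bmbmf2}, $\mathcal{T}$ and $\mathcal{T}^d$ act degree by degree via $\overline{\mathcal{T}}(A)=\bigoplus_i\mathcal{T}(A_i)$, while truncation simply discards the components in degree $>d$; the induced multiplications, comultiplications and counits agree on both sides by the formulas in those remarks. Composing with $\mathcal{S}^d$ gives $\mathcal{S}^d\circ\Theta'_d\circ\mathcal{T}=\mathcal{S}^d\circ\mathcal{T}^d\circ\Theta_d$, so it remains to exhibit a natural isomorphism $\mathcal{S}^d\circ\mathcal{T}^d\circ\Theta_d\simeq\Theta_d$. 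The isomorphism $\Upsilon\colon\mathcal{S}\circ\mathcal{T}\to\id_{\Vu}$ furnishes in each degree $i\leq d$ an isomorphism $\Upsilon_{A_i}$, and I would assemble these into $\bigoplus_{i=0}^d\Upsilon$ on $\Theta_d(A)$, the content of the claim being that this assembled map is a morphism in $\PB^{\leq d}(\Vu)$.

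For part (2) the key is the uniqueness of left adjoints. Since $(\mathcal{T},\omega)$ and $(\mathcal{S},\alpha)$ are quasi-inverse equivalences, the induced functors are equivalences as well, so $\mathcal{T}^d\dashv\mathcal{S}^d$ and $\mathcal{S}\dashv\mathcal{T}$, each quasi-inverse being simultaneously a left and a right adjoint. Combining these with $\Psi'_{d!}\dashv\Theta'_d$, the instance of Proposition \ref{prop:adjunction} in $\Vu'$, the composite of left adjoints $\mathcal{S}\circ\Psi'_{d!}\circ\mathcal{T}^d$ is left adjoint to the composite of right adjoints in reverse order, namely $\mathcal{S}^d\circ\Theta'_d\circ\mathcal{T}$, which by part (1) is naturally isomorphic to $\Theta_d$. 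As $\Psi_{d!}$ is also left adjoint to $\Theta_d$, uniqueness yields $\Psi_{d!}\simeq\mathcal{S}\circ\Psi'_{d!}\circ\mathcal{T}^d$. Part (3) then follows by composition: from $F_d=\Psi_{d!}\circ\Theta_d$, part (2), and the strict commutation $\mathcal{T}^d\circ\Theta_d=\Theta'_d\circ\mathcal{T}$ established in (1), one gets $F_d\simeq\mathcal{S}\circ\Psi'_{d!}\circ\Theta'_d\circ\mathcal{T}=\mathcal{S}\circ F'_d\circ\mathcal{T}$. Part (4) is immediate: applying $\mathcal{T}$ to an isomorphism $A\simeq F_d(A)$ and invoking $\mathcal{T}\circ\mathcal{S}\simeq\id_{\Vu'}$ together with (3) gives $\mathcal{T}(A)\simeq F'_d(\mathcal{T}(A))$, and the converse is symmetric under swapping the roles of $\mathcal{T}$ and $\mathcal{S}$.

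For part (5), when $\mathcal{S}\circ\mathcal{T}=\id_{\Vu}$ and $\mathcal{T}\circ\mathcal{S}=\id_{\Vu'}$ one may take $\Upsilon$ to be the identity, so the natural isomorphism of (1) becomes the equality $\mathcal{S}^d\circ\Theta'_d\circ\mathcal{T}=\Theta_d$; the adjunction argument of (2) then identifies the two left adjoints on the nose, and (3) follows by strict composition, producing the displayed equalities. These identifications are uniquely determined because, once $\Upsilon=\id$ is fixed, each is an equality of functors pinned down by the strict monoidal data $\omega,\alpha$. The step I expect to be the main obstacle is verifying in part (1) that $\bigoplus_{i=0}^d\Upsilon$ is genuinely a morphism in $\PB^{\leq d}(\Vu)$, that is, that it intertwines the multiplications and comultiplications of $\mathcal{S}^d\circ\mathcal{T}^d\circ\Theta_d(A)$ and $\Theta_d(A)$; this forces one to use that $\Upsilon$ is a \emph{monoidal} natural transformation, which rests in turn on the compatibility of $\Upsilon$ with the structure isomorphisms $\omega$ and $\alpha$ inherent in a braided monoidal equivalence and on the degree-wise naturality of $\Upsilon$ with respect to the braidings.
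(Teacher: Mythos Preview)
Your proposal is correct and follows essentially the same approach as the paper: part (1) is handled degree by degree via the observation that $\mathcal{T}$ and $\Theta_d$ strictly commute, part (2) by uniqueness of left adjoints (the paper writes out the chain of Hom-isomorphisms explicitly, which is the same argument unpacked), and (3)--(5) by composition and specialization. Your identification of the monoidal compatibility of $\Upsilon$ as the point requiring care in (1) is apt; the paper addresses it by checking separately that $\bigoplus_{i=0}^d\Upsilon_{A_i}$ is both a coalgebra and an algebra isomorphism, but otherwise the two arguments coincide.
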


\begin{proof}
  (1)  Let $A=\bigoplus_{i\geq 0}A_i$ be an object in $\CB(\Vu)$. Then $\mathcal{T}(A)=\bigoplus_{i\geq 0}\mathcal{T}(A_i)$. Hence $\Theta'_d(\mathcal{T}(A))=\bigoplus_{i=0}^d\mathcal{T}(A_i)$ as coalgebras. Therefore $\SSS^d(\Theta'_d(\mathcal{T}(A)))=\bigoplus_{i=0}^d \SSS(\mathcal{T}(A_i))\simeq \bigoplus_{i=0}^d A_i$, where the last isomorphism is  $\bigoplus_{i=0}^d\Upsilon_{A_i}$.
Now, $\mathcal{T}(\bigoplus_{j>d}A_j)=\bigoplus_{j>d}\mathcal{T}(A)_j$, so $\Theta'_d(\mathcal{T}(A))=\mathcal{T}(A)/\bigoplus_{j>d}\mathcal{T}(A_j)$ as algebras. Hence $\bigoplus_{i=0}^d\Upsilon_{A_i}$ induces an algebra isomorphism $\SSS^d(\Theta'_d(\mathcal{T}(A)))=\SSS(\mathcal{T}(A))/(\bigoplus_{j>d}\SSS(\mathcal{T}(A)))\simeq \Theta_d(A)$. Since  $\Upsilon$ is  a natural isomorphism, then $\bigoplus_{i=0}^d\Upsilon$ is a natural isomorphism from $\SSS^d\circ \Theta'_d\circ \mathcal{T}$  to $\Theta_d$.\\
(2)
For $A\in \PB^{\leq d}(\Vu)$ and $B\in \PB(\Vu)$ we have the sequence of natural isomorphisms
   \begin{equation}\label{eq:adj}
   \begin{split}&\mathrm{Hom}_{\CB(\Vu)}(\SSS\circ \Psi'_{d!}\circ \mathcal{T}^dA,B)\simeq \mathrm{Hom}_{\CB(\Vu)}(\SSS\circ \Psi'_{d!}\circ\mathcal{T}^dA, \mathcal{S}\circ \mathcal{T}B)\\
   &\simeq \mathrm{Hom}_{\CB(\Vu')}(\Psi'_{d!}\mathcal{T}^dA,\mathcal{T}B) \simeq \mathrm{Hom}_{\CB^{\leq d}(\Vu')}(\mathcal{T}^dA, \Theta'_d\mathcal{T}B)\\
   &\simeq \mathrm{Hom}_{\CB^{\leq d}(\Vu')}(\mathcal{T}^dA, \mathcal{T}^d\circ \SSS^d\circ \Theta'_d\mathcal{T}B)\simeq \mathrm{Hom}_{\CB^{\leq d}(\Vu')}(\mathcal{T}^dA, \mathcal{T}^d\Theta_dB)\\
   &\simeq \mathrm{Hom}_{\CB^{\leq d}(\Vu')}(A, \Theta_dB)
    \end{split}
    \end{equation}
    where in the second isomorphism we use that $\SSS$ is fully faithful, in the third the adjunction $\Psi'_{d!} \dashv \Theta'_d$, in the fourth that $\mathcal{T}^d$ is the quasi inverse of $\mathcal{S}^d$, in the fifth we compose with $\mathcal{T}^d(\bigoplus_{i=0}^d \Upsilon_{B_i})$ and in the last we use that $\mathcal{T}^d$ is fully faithful.
    Hence  $\SSS\circ \Psi'_{d!}\circ \mathcal{T}^d$ is left adjoint to $\Theta_d$, and so it is naturally isomorphic to $\Psi_{d!}$ by \cite[Corollary 1]{MCL}.\\
    (3) Follows form (1) and (2) and (4) follows from (3).
    Finally, observe that if $\mathcal T$ and $\mathcal S$ are mutual inverses, we may choose $\Upsilon$ to be the identity, so $\Theta_d=\SSS^d\circ \Theta'_d\circ \mathcal{T}$. In this case, for $A\in\PB^{\leq d}(\Vu)$, the natural isomorphism $\Psi_{d!}'\mathcal T^ d(A)\to \mathcal{T}\Psi_{d!}(A)$ is the unique morphism corresponding to $\id_A$ through the chain of identifications
\begin{align*}\mathrm{Hom}_{\PB^{\leq d}(\Vu)}(A,A)&\equiv \mathrm{Hom}_{\PB^{\leq d}(\Vu')}(\mathcal T^d(A),\mathcal T^d \Theta_d\Psi_{d!}(A))\\
&\equiv \mathrm{Hom}_{\PB^{\leq d}(\Vu')}(\mathcal T^d(A), \Theta'_d\mathcal T\Psi_{d!}(A))\simeq \mathrm{Hom}_{\PB(\Vu')}(\Psi_{d!}'\mathcal T^d(A),\mathcal T\Psi_{d!}(A)),
\end{align*}
where the last isomorphism is given by \eqref{eq:Gamma}. In other words,  it is the unique morphism extending the identity, up to identifications.

The last equality immediately follows.    \end{proof}

\subsection{Cocycle twisting} 
 Let $H$ be a Hopf algebra with multiplication $m_H$, comultiplication $\Delta$ and counit $\epsilon$. We adopt a Sweedler's like notation for coproducts and coactions, omitting the summation symbol. We recall that a convolution invertible linear map $\sigma: H\otimes H\to \Bbbk$ is 
 called a {normalized} $2$-{cocycle} for $H$ if 
  \begin{equation}\label{eq:sigma1}
      \sigma(x_{1}\otimes y_{1})\sigma(m_H(x_{2}\otimes y_{2})\otimes z)=\sigma(y_{1}\otimes z_{1})\sigma (x\otimes m_H(y_{2}\otimes z_{2})),\quad \forall x,y,z\in H,\quad \sigma(1\otimes 1)=1.
     \end{equation}
If $H=\Bbbk G$ is the group algebra of a finite group $G$, then a $2$-cocycle is the linear extension of a usual  $2$-cocycle on $G$: \begin{equation}\label{eq:cocycle}\sigma(x,y)\sigma(xy,z)=\sigma(y,z)\sigma(x,yz),\quad\sigma(x,1_G)=\sigma(1_G,x)=1,\quad\forall x,\,y,\,z\in G.\end{equation}
The set of normalized $2$-cocycles on $H$ with values in $\Bbbk$ is denoted by $Z^2(H,\Bbbk)$.

\medskip

We recall from \cite[Theorem 1.6]{Doi} that the  twist of $H$ by  a $2$-cocycle $\sigma$ is the Hopf algebra $H_\sigma$  with same underlying vector space and coproduct as $H$, and multiplication given by \begin{equation*}
 m_{H_\sigma}(x\otimes y):= \sigma(x_{1}\otimes y_{1})m_H(x_{2}\otimes y_{2})\sigma^{-1}(x_{3}\otimes y_{3}),\quad x,y\in H, \end{equation*}
where $\sigma^{-1}$ denotes the convolution inverse of $\sigma$.

\medskip

According to \cite[Theorem 2.7]{MO} there is a braided monoidal equivalence $(\mathcal{T}_{\sigma}, \omega)$  between the categories of Yetter-Drinfeld modules $\ydH$ to $\ydHs$.  For an object $V$ in $\ydH$ with $H$-action $\rhd$ and coaction $\lambda\colon V\to H\otimes V$ mapping $v\in V$ to $v_{-1}\otimes v_0$, the object $\mathcal{T}_{\sigma}(V)$ has underlying vector space $V$, coaction $\lambda$, and $H_\sigma$-action given by $h\rhd_{\sigma}v:=\sigma(h_1\otimes v_{-1})(h_2\rhd v_0)_0\sigma^{-1}((h_2\rhd v_0)_{-1}\otimes h_3)$,  for $h\in H$ and $v\in V$. At the level of morphisms $\mathcal{T}_{\sigma}$ is the identity. The components of the natural transformation $\omega$ are given by
\begin{align*}
    \omega_{V,W}\colon V_{\sigma}\otimes W_{\sigma}&\to (V\otimes W)_{\sigma}\\
    (v\otimes w)&\mapsto \sigma(v_{-1}\otimes w_{-1})v_0\otimes w_0,
\end{align*}
for all $v,w\in \ydH$.

 Then,  $(\mathcal{T}_{\sigma},J)$ induces a  functor $\mathcal{T}_{\sigma}\colon \CB(\ydH)\to \CB(\ydHs)$. According to Remark \ref{rmk:bmbmf}, for $A\in \PB(\ydH)$ with multiplication $m_A$, the multiplication in $\mathcal T_\sigma(A)$ is given by $a\otimes a'\mapsto\sigma(a_{-1}\otimes a'_{-1})m_A(a_0\otimes a'_0)$, i.e., it is the algebra twisted by $\sigma$.

\begin{theorem}\label{thm:twistapprox}
Let $d\in \NN$. Let $H$ be a Hopf algebra, $\sigma\in Z^2(H,\Bbbk)$, and $F_d, F_d'$ be the $d$-th approximation functors on $\PB(\ydH)$ and $\PB(\ydHs)$, respectively. Then 
\begin{enumerate}
\item $F'_d\circ \mathcal{T}_{\sigma}=\mathcal{T}_{\sigma}\circ F_d$.
\item For $A\in \PB(\ydH)$ there holds
$A\simeq F_d(A)$ if and only if $F'_d(\mathcal{T}_{\sigma}(A))\simeq \mathcal{T}_{\sigma}(A)$. 
\end{enumerate}
\end{theorem}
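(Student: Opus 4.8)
The plan is to deduce the entire statement from Theorem \ref{thm:equivaprox} by observing that the twisting functor $\mathcal{T}_{\sigma}$ is not merely a braided monoidal equivalence but an honest \emph{isomorphism} of braided categories, which places us in the equality case (5) of that theorem rather than the weaker natural-isomorphism cases. Indeed, by construction $\mathcal{T}_{\sigma}\colon\ydH\to\ydHs$ fixes the underlying vector space and the coaction of every object and is the identity on morphisms; only the $H$-action is deformed, according to the explicit formula recalled above. The natural candidate for a strict inverse is the twist $\mathcal{S}:=\mathcal{T}_{\sigma^{-1}}\colon\ydHs\to\ydH$ by the convolution-inverse cocycle.

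First I would verify that $\mathcal{S}\circ\mathcal{T}_{\sigma}=\id_{\ydH}$ and $\mathcal{T}_{\sigma}\circ\mathcal{S}=\id_{\ydHs}$ \emph{on the nose}, as functors. On underlying vector spaces, coactions, and morphisms this is immediate, since both functors are the identity there. On actions it amounts to checking that deforming $\rhd$ by $\sigma$ and then by $\sigma^{-1}$ returns $\rhd$; this is a direct consequence of the identity $(H_{\sigma})_{\sigma^{-1}}=H$ of Hopf algebras \cite{Doi}, combined with the cocycle condition \eqref{eq:sigma1} and the Yetter--Drinfeld compatibility between action and coaction, all of which is furnished by \cite{MO,AFGV}. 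This is the only computational ingredient, and it is precisely the main obstacle of the argument: establishing the strict (rather than up-to-isomorphism) invertibility of $\mathcal{T}_{\sigma}$ is what upgrades the conclusion of Theorem \ref{thm:equivaprox} from a natural isomorphism to the on-the-nose equality demanded by part (1); everything else is formal.

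Once the two functor identities hold, Theorem \ref{thm:equivaprox}(5) applies with $\mathcal{T}=\mathcal{T}_{\sigma}$ and $\Upsilon=\id$, yielding in particular the equality
\[
F_d=\mathcal{S}\circ F'_d\circ\mathcal{T}_{\sigma}.
\]
Composing on the left with $\mathcal{T}_{\sigma}$ and using $\mathcal{T}_{\sigma}\circ\mathcal{S}=\id_{\ydHs}$ gives $\mathcal{T}_{\sigma}\circ F_d=F'_d\circ\mathcal{T}_{\sigma}$, which is assertion (1).

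Finally, assertion (2) follows formally from (1), and is also the specialization of Theorem \ref{thm:equivaprox}(4). If $A\simeq F_d(A)$ in $\PB(\ydH)$, applying the functor $\mathcal{T}_{\sigma}$ and invoking (1) gives $\mathcal{T}_{\sigma}(A)\simeq\mathcal{T}_{\sigma}(F_d(A))=F'_d(\mathcal{T}_{\sigma}(A))$. Conversely, if $F'_d(\mathcal{T}_{\sigma}(A))\simeq\mathcal{T}_{\sigma}(A)$, applying $\mathcal{S}$ and using the equality $F_d=\mathcal{S}\circ F'_d\circ\mathcal{T}_{\sigma}$ together with $\mathcal{S}\circ\mathcal{T}_{\sigma}=\id_{\ydH}$ yields $F_d(A)\simeq A$.
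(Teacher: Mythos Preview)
Your proposal is correct and follows essentially the same approach as the paper: observe that $\sigma^{-1}\in Z^2(H_\sigma,\Bbbk)$ with $(H_\sigma)_{\sigma^{-1}}=H$, so that $\mathcal{T}_{\sigma}$ and $\mathcal{T}_{\sigma^{-1}}$ are mutually inverse (not merely quasi-inverse) braided monoidal functors, and then invoke Theorem~\ref{thm:equivaprox}(5) for part~(1) and Theorem~\ref{thm:equivaprox}(4) (or a direct argument from~(1)) for part~(2). Your write-up simply spells out in more detail what the paper compresses into two sentences and a citation to \cite[\S 3.4]{AFGV}.
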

\begin{proof} If $\sigma$ lies in $Z^2(H,\Bbbk)$, then  $\sigma^{-1}$ lies in $Z^2({H_\sigma},\Bbbk)$,  \cite[Remark 2.8.3]{HS} and $(H_\sigma)_{\sigma^{-1}}=H$.
Hence $\mathcal{T}_{\sigma}$ and $\mathcal{T}_{\sigma^{-1}}$ are mutual inverses, see also \cite[\S 3.4]{AFGV}. We conclude applying  Theorem \ref{thm:equivaprox} (4) and (5).
\end{proof}

\subsection{A concrete example: Nichols algebras and cocycle twist on racks}
We discuss here the very concrete case of Nichols algebras corresponding to twist equivalent rack cocycles. We introduce the necessary terminology. 

\begin{defn} A {rack} is a pair $(X,\trid)$ where  $X$ is a non-empty  set and  $\trid$ is a binary operation on $X$ satisfying  the following conditions:
\begin{enumerate}
\item The map $x\trid -\colon X\to X$ is bijective for any $x\in X$;
\item $x\trid(y\trid z)=(x\trid y)\trid (x\trid z)$ for any $x,\,y,\,z\in X$. 
\end{enumerate}
\end{defn}
 The prototypical example of a rack is a union of conjugacy classes in a finite group together with the conjugation operation. 
\begin{defn}\label{def:cocycledegn}
A $2$-{cocycle of degree} $n$ on a rack $(X,\trid)$ is a 
map $q\colon X\times X\to \GL_n(\Bbbk)$  satisfying
\begin{equation}\label{eq:codeg}
    q({x,y\trid z})q({y,z})=q({x\trid y, x\trid z})q({x,z})
\end{equation}
for $x,\,y,\,z\in X$. 
\end{defn}

For a rack $(X,\trid)$   and   a $2$-cocycle $q$ of degree $n$ 
     on $(X,\trid)$, let ${\Bbbk}X$ denote the formal linear span of elements in $X$. Then
     \begin{align}
     c_q\colon (\Bbbk X\otimes \Bbbk^n)\otimes (\Bbbk X\otimes \Bbbk^n)&\to (\Bbbk X\otimes \Bbbk^n)\otimes (\Bbbk X\otimes \Bbbk^n)\\
(x\otimes v)\otimes (y\otimes w)&\mapsto (x\trid y\otimes q({x,y})(w))\otimes (x\otimes v)
\end{align}
is a solution of the braid equation \eqref{eq:braid1} for $A=B=C=\Bbbk X$. We can view $\Bbbk X$ as an object in the additive $\Bbbk$-linear braided monoidal category $\Vu$ generated by $\Bbbk X$ and $c_q$. Conversely, every solution of \eqref{eq:braid1} stemming from a finite-dimensional Yetter-Drinfeld module over a finite group can be constructed from a rack and a cocycle, \cite[Thm 1.14]{AG}. 

\medskip

We recall the construction of the Nichols algebra from Example \ref{ex:tensor-cotensor-Nichols} in this setup, which is valid for any vector space $V$ equipped with a solution $c=c_{V,V}$ of \eqref{eq:braid1}. For $n\geq2$, we denote the symmetric group on $n$ letters by $\Sn$ and chose $s_i:=(i\ i+1)\in \Sn$ for $i=1,\,\ldots,\,n-1$ as a set of Coxeter generators. Let $B_n$ be the braid group  with presentation: 
\[
\langle \sigma_1, \dots, \sigma_{n-1} \; | \; \sigma_i \sigma_{i+1} \sigma_i = \sigma_{i+1} \sigma_i \sigma_{i+1}, \; \sigma_i \sigma_j =\sigma_j \sigma_i, \mbox{if $|i-j|>1$} \rangle.
\]
The {Matsumoto section} is the map $M_n\colon\Sn\to B_n$ determined uniquely by the condition $M_n(s_{i_1}\cdots s_{i_l})=\sigma_{i_1}\cdots\sigma_{i_l}$ for any reduced expression $s_{i_1}\cdots s_{i_l}$. 

For $n\geq 2$ the action of the braid group $B_n$  associated with $V$ and $c$
is the representation $\rho_n\colon B_n\to\GL(V^{\otimes n})$ obtained by letting $\sigma_i$ act as $c$ on the $i$-th and $(i+1)$-th tensor factors and trivially on the others. The $n$-th {quantum symmetrizer} $Q_n$ is then defined as the endomorphism $\sum_{\sigma\in\Sn}\rho_n(M_n(\sigma))$ of $V^{\otimes n}$.

The {Nichols algebra} ${\mathcal B}(V,c)$ is the graded algebra $T(V)/\bigoplus_{n\geq 2}{\rm ker}(Q_n)$ with coproduct determined by requiring the elements in  $V$ to be primitive. 

\medskip

 Let $(C,\trid)$ be a rack given by a union of conjugacy classes in a finite group $G$. Let $q,\, q'$ be $2$-cocycles of degree 1  on $(C,\trid)$. Then  $q,\, q'$   are said to be {twist-equivalent} if  there exists  a $2$-cocycle $\sigma$ on $G\times G$ satisfying 
    \begin{equation}\label{eq:rtwist}q'({x,y})=\sigma(x,y)q({x,y})\sigma(x\trid y,x)^{-1},\quad \mbox{ for all }x,\,y\in C.\end{equation}

\begin{corollary}\label{cor:d-twist}
Let $\Bbbk X$ be a Yetter-Drinfeld module of a finite group $G$, supported by a conjugacy class and let $q$ be the corresponding cocycle. Let $q'$ be a a twist-equivalent cocycle $X$, and let ${\mathcal B}(X,q)$ and ${\mathcal B}(X,q')$ be the Nichols algebras associated with  $(\Bbbk X,c_q)$ and $(\Bbbk X,c_{q'})$, respectively.
Then, for $d\in\NN$ there holds ${\mathcal B}(X,q)\simeq\widehat{{\mathcal B}(X,q)}_d$ if and only if  ${\mathcal B}(X,q')\simeq\widehat{{\mathcal B}(X,q')}_d$. 
\end{corollary}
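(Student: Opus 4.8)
The plan is to realize the rack-cocycle twist \eqref{eq:rtwist} as a Hopf-algebraic (Doi) cocycle twist and then feed the result into Theorem \ref{thm:twistapprox}. First I would set $H=\Bbbk G$ and regard the function $\sigma$ appearing in \eqref{eq:rtwist} as a normalized $2$-cocycle in $Z^2(\Bbbk G,\Bbbk)$, so that the twist $H_\sigma=(\Bbbk G)_\sigma$ and the braided monoidal equivalence $\mathcal{T}_\sigma\colon\ydG\to\ydHs$ of \cite{MO} are available. As a Yetter-Drinfeld module over $\Bbbk G$, the space $\Bbbk X$ carries the coaction $\lambda(x)=x\otimes x$ and the action encoding the cocycle $q$, so that its braiding is precisely $c_q$.

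The key step is to verify that $\mathcal{T}_\sigma(\Bbbk X,c_q)\simeq(\Bbbk X,c_{q'})$ as objects of $\ydHs$. Using the explicit formula for the twisted action $h\rhd_\sigma v$ recalled just before Theorem \ref{thm:twistapprox}, evaluated on the group-like elements of $\Bbbk G$ and the conjugation action, the coaction is left unchanged while the action of $x\in X$ on $y\in X$ acquires exactly the scalar factor $\sigma(x,y)\sigma(x\trid y,x)^{-1}$. Hence the twisted braiding on $\Bbbk X$ is $c_{q'}(x\otimes y)=\sigma(x,y)q(x,y)\sigma(x\trid y,x)^{-1}(x\trid y)\otimes x$, which is precisely the braiding of the cocycle $q'$ given by \eqref{eq:rtwist}. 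This is the \emph{main obstacle}: it is the bookkeeping that aligns the rack-cocycle twist with the Doi twist, and it is where the explicit computations of \cite{MO,AFGV} are needed.

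Since the Nichols algebra is constructed functorially from the braiding via the quantum symmetrizers $Q_n$, and $\mathcal{T}_\sigma$ is a braided monoidal functor, it follows that $\mathcal{T}_\sigma(\mathcal{B}(X,q))\simeq\mathcal{B}(X,q')$ in $\CB(\ydHs)$. Moreover, both Nichols algebras are generated in degree $1$, hence lie in $\CB^1$, so by Theorem \ref{Thm1} their $d$-th approximations $F_d(\mathcal{B}(X,q))$ and $F'_d(\mathcal{B}(X,q'))$ are naturally isomorphic to the $d$-atic covers $\widehat{\mathcal{B}(X,q)}_d$ and $\widehat{\mathcal{B}(X,q')}_d$.

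Finally I would invoke Theorem \ref{thm:twistapprox}(2) with $A=\mathcal{B}(X,q)$, which gives $A\simeq F_d(A)$ if and only if $\mathcal{T}_\sigma(A)\simeq F'_d(\mathcal{T}_\sigma(A))$. Substituting the identification $\mathcal{T}_\sigma(A)\simeq\mathcal{B}(X,q')$ together with the isomorphisms $F_d\simeq\widehat{(\ )}_d$ and $F'_d\simeq\widehat{(\ )}_d$ from Theorem \ref{Thm1}, this reads exactly as $\mathcal{B}(X,q)\simeq\widehat{\mathcal{B}(X,q)}_d$ if and only if $\mathcal{B}(X,q')\simeq\widehat{\mathcal{B}(X,q')}_d$. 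I would also remark that being equal to one's $d$-th approximation is an intrinsic property of the underlying braided vector space, hence insensitive to whether $\mathcal{B}(X,q')$ is regarded inside $\ydG$ or inside $\ydHs$; this justifies transporting the conclusion back to the original category and completes the argument.
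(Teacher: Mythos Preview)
Your proposal is correct and follows essentially the same route as the paper: realize $\sigma$ as a Hopf $2$-cocycle on $\Bbbk G$, identify $\mathcal{T}_\sigma(\Bbbk X,c_q)\simeq(\Bbbk X,c_{q'})$, use functoriality of the Nichols algebra, and then apply Theorems \ref{Thm1} and \ref{thm:twistapprox}. The only cosmetic difference is that the paper outsources the key identification $\mathcal{T}_\sigma(\Bbbk X)\simeq(\Bbbk X,c_{q'})$ to \cite[Lemma 3.9]{AFGV}, whereas you sketch the direct computation of the twisted action yourself.
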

\begin{proof} Let $\sigma\in Z^2(\Bbbk G,\Bbbk^\times)$ satisfy \eqref{eq:rtwist} for $q$ and $q'$. It induces an equivalence of categories from $\,^{\Bbbk G}_{\Bbbk G}{\mathcal YD}$ to $\,^{\Bbbk G_\sigma}_{\Bbbk G_\sigma}\!{\mathcal YD}$. According to \cite[Lemma 3.9]{AFGV}, the vector space $\Bbbk X$ with braiding $c_{q'}$ is the image through $\mathcal T_\sigma$ of $\Bbbk X$. The statement follows from Theorems \ref{Thm1} and \ref{thm:twistapprox} and functoriality of the Nichols algebra.  
\end{proof}

In order to introduce the definition of Nichols algebra associated with a braided vector space we need to recall some basic notions.

\appendix
\section{An alternative proof of Corollary \ref{cor:d-twist}}
\label{Appendix}
In this appendix we present an alternative proof of Corollary \ref{cor:d-twist}. As a stepping stone, we prove the following proposition which is a direct consequence of the arguments in \cite[Theorem 2.7]{MO} and \cite[Theorem 3.8]{AFGV}.  We spell out a direct proof that does not require any categorical language. 
\begin{proposition}\label{prop:Bnrep-iso}Let $X$ be a union of conjugacy classes in a finite group $G$. If $q$ and $q'$ are twist-equivalent cocycles on $X$ with values in $\Bbbk^\times$, then for any $n\geq 2$ the $B_n$-representations associated respectively with $(\Bbbk X,c_{q'})$ and $(\Bbbk X,c_{q})$, are isomorphic.
\end{proposition}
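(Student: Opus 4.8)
The plan is to prove the proposition by exhibiting an explicit, basis-diagonal isomorphism $D\colon(\Bbbk X)^{\otimes n}\to(\Bbbk X)^{\otimes n}$ that intertwines the two braid group actions, avoiding any categorical machinery as announced. Since $B_n$ is generated by $\sigma_1,\ldots,\sigma_{n-1}$, it suffices to check that $D$ intertwines the images of each generator. In the degree-one situation the braidings specialize to $c_q(x\otimes y)=q(x,y)\,(x\trid y)\otimes x$ and $c_{q'}(x\otimes y)=q'(x,y)\,(x\trid y)\otimes x$, and $\rho_n(\sigma_i)$, resp.\ $\rho_n'(\sigma_i)$, is $c_q$, resp.\ $c_{q'}$, applied in the tensor slots $i,i+1$ and the identity elsewhere. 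Writing the conjugation rack operation as $x\trid y=xyx^{-1}$, the elementary identity $(x\trid y)\,x=xy$ will be the workhorse of the argument.

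Let $\sigma\colon G\times G\to\Bbbk^\times$ be a group $2$-cocycle realizing the twist equivalence \eqref{eq:rtwist}. I would define $D$ to be the diagonal operator
\begin{equation*}
D(x_1\otimes\cdots\otimes x_n)=\left(\prod_{k=1}^{n-1}\sigma(x_1\cdots x_k,\,x_{k+1})\right)\,x_1\otimes\cdots\otimes x_n,
\end{equation*}
with the empty product read as $1$. Because $\sigma$ takes values in $\Bbbk^\times$, every diagonal scalar is invertible, so $D$ is an isomorphism of vector spaces. (Conceptually this $D$ is the iterate of the structural map $\omega_{\Bbbk X,\Bbbk X}$ appearing in Theorem \ref{thm:equivaprox}, but here it is introduced by hand so that the verification stays purely computational.)

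The heart of the proof is then the identity $D\circ\rho_n'(\sigma_i)=\rho_n(\sigma_i)\circ D$ for each $i$. Applying $\rho_n'(\sigma_i)$ replaces $x_i,x_{i+1}$ by $(x_i\trid x_{i+1}),x_i$ and produces the scalar $q'(x_i,x_{i+1})$; among the diagonal factors of $D$, only those indexed by $k=i-1$ and $k=i$ change, since for $k\ge i+1$ the partial product $x_1\cdots x_k$ is unaffected by the swap, again by $(x_i\trid x_{i+1})x_i=x_ix_{i+1}$. Setting $P:=x_1\cdots x_{i-1}$, $u:=x_i$, $v:=x_{i+1}$, $w:=u\trid v$, and using $wu=uv$, two applications of the cocycle identity \eqref{eq:cocycle} give
\begin{equation*}
\sigma(P,w)\,\sigma(Pw,u)=\sigma(w,u)\,\sigma(P,uv),\qquad \sigma(P,u)\,\sigma(Pu,v)=\sigma(u,v)\,\sigma(P,uv).
\end{equation*}
Hence the ratio of the changed factors collapses to $\sigma(w,u)/\sigma(u,v)=\sigma(x_i\trid x_{i+1},x_i)/\sigma(x_i,x_{i+1})$, the common term $\sigma(P,uv)$ cancelling. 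Comparing this with the scalar $q(x_i,x_{i+1})$ contributed by $\rho_n(\sigma_i)\circ D$, the required equality reduces exactly to the defining twist relation \eqref{eq:rtwist}.

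The one delicate point, and thus the main obstacle, is precisely the disappearance of all dependence on the ambient partial product $P=x_1\cdots x_{i-1}$: a priori the factors of $D$ around position $i$ involve $P$, yet the representation matrices of $\rho_n$ and $\rho_n'$ must be conjugate by something governed only by the local data $\sigma(u,v)$ and $\sigma(w,u)$. The two cocycle identities above are what force this cancellation. Once the intertwining relation is verified on every generator, $D$ is an isomorphism of $B_n$-representations from the one associated with $(\Bbbk X,c_{q'})$ to the one associated with $(\Bbbk X,c_{q})$, which proves the proposition.
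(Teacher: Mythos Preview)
Your proof is correct and follows essentially the same approach as the paper: an explicit diagonal intertwiner on $(\Bbbk X)^{\otimes n}$, verified on the braid generators via two applications of the group cocycle identity that cancel the dependence on the ambient context and reduce the check to the twist relation \eqref{eq:rtwist}. The paper writes the diagonal scalar as $\prod_{i=1}^{n-1}\sigma(x_i,x_{i+1}\cdots x_n)$ (so the ``context'' that must cancel is the right tail $x_{j+2}\cdots x_n$) rather than your left-bracketed $\prod_{k=1}^{n-1}\sigma(x_1\cdots x_k,x_{k+1})$, but repeated use of \eqref{eq:cocycle} shows these two products are equal, so your $D$ is literally the paper's intertwiner $f$.
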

\begin{proof}
Let $\sigma\in Z^2(G,\Bbbk^\times)$ satisfy \eqref{eq:rtwist} for $q$ and $q'$. For $n\geq2$, let $\rho'_n$ and $\rho_n$ be the  $B_n$-representations associated with $(\Bbbk X,c_{q'})$ and $(\Bbbk X,c_{q})$, respectively.
We define the linear isomorphism 
\begin{align*}
    f\colon(\Bbbk X)^{\otimes n}&\to (\Bbbk X)^{\otimes n}\\
    x_1\otimes\cdots\otimes x_n&\mapsto \left(\prod_{i=1}^{n-1}\sigma(x_i,x_{i+1}\cdots x_n) \right)x_1\otimes\cdots\otimes x_n
    \end{align*}
    for $x_i\in X$ with $i=1,\,\ldots,\,n$ and we show that
    $\rho_n(\sigma_j)f=f( \rho'_n(\sigma_j))$ for any generator $\sigma_j\in B_n$. 
    Let $x_1,\,\ldots,\,x_n\in X$. We set
    $\underline{x}=x_1\otimes\cdots \otimes x_{j-1}\otimes (x_j\trid x_{j+1})\otimes x_j\otimes x_{j+2}\otimes\cdots\otimes x_n$ and ${\mathbf x}=x_{j+2}\cdots x_n$, (product in $G$). Then
    \begin{align*}
    \rho_n(\sigma_j)f(x_1\otimes\cdots\otimes x_n)=\left(\prod_{i=1}^{n-1}\sigma(x_i,x_{i+1}\cdots x_n) \right)q({x_j,x_{j+1}})\underline{x}
\end{align*}
and
\begin{align*}
   f(\rho'_n(\sigma_j))(x_1\otimes\cdots\otimes x_n)=q'({x_j,x_{j+1}})\Bigl(\prod_{\genfrac{}{}{0pt}{2}{1\leq i\leq j-1\mbox{ or }}{
   j+2\leq i\leq n-1}}\sigma(x_i,x_{i+1}\cdots x_n) \Bigr)
   \sigma(x_j\trid x_{j+1},x_j\mathbf{x})\sigma(x_j,{\mathbf x})
   \underline{x}.
\end{align*}
   These expressions are equal if and only if 
   \begin{equation}\label{eq:condition}\sigma(x_j,x_{j+1}\mathbf x)\sigma(x_{j+1},\mathbf{x})\sigma(x_j\trid x_{j+1},x_j)=\sigma(x_j\trid x_{j+1},x_j\mathbf x)\sigma(x_j,x_{j+1})\sigma(x_j,\mathbf x)\end{equation}
   Applying \eqref{eq:cocycle} to $\{x_j, x_{j+1},\mathbf x\}$ on the left hand side of \eqref{eq:condition} and to $\{x_j\trid x_{j+1}, x_j, \mathbf x\}$ on the right hand side we obtain $\sigma(x_j,x_{j+1})\sigma(x_j x_{j+1},\mathbf x)\sigma(x_j\trid x_{j+1},x_j)$ on both sides.
\end{proof}

\begin{proof}[Proof of Corollary \ref{cor:d-twist}]
For $n\geq 2$, let $Q_n$ be the quantum symmetrizer corresponding to the $B_n$-representation $\rho_n$ associated with $(X,q)$. Now $\widehat{{\mathcal B}(X,q)}_d\simeq {\mathcal B}(X,q)$ if and only if 
the natural algebra projection $\widehat{{\mathcal B}(X,q)}_d\to {\mathcal B}(X,q)$ is a linear isomorphism, that is, if and only if for each $n>d$ the vector spaces $(\Bbbk X)^{\otimes n}/{\rm ker}\; (Q_n)$ and $(\Bbbk X)^{\otimes n}/\left(\bigoplus_{j=1}^d{\rm ker}\; (Q_j)\right)\cap (\Bbbk X)^{\otimes n}$ have the same dimension. Here, $\left(\bigoplus_{j=1}^d{\rm ker}\;(Q_j)\right)$ denotes the ideal generated by $\bigoplus_{j=1}^d{\rm ker}\;Q_j$ in $T(\Bbbk X)$. The dimension of the above quotients depends only on the isomorphism class of $\rho_n$ for any $n$. 
The claim follows from Proposition \ref{prop:Bnrep-iso}.
\end{proof}

\medskip

{\bf Funding and Acknowledgement.}
Project funded by the EuropeanUnion – NextGenerationEU under the National
Recovery and Resilience Plan (NRRP), Mission 4 Component 2 Investment 1.1 -
Call PRIN 2022 No. 104 of February 2, 2022 of Italian Ministry of
University and Research; Project 2022S8SSW2 (subject area: PE - Physical
Sciences and Engineering)``Algebraic and geometric aspects of Lie theory".  The authors participate in the INdAM group GNSAGA.

\end{document}